    \setlist[enumerate,1]{label=\textnormal{(\alph*)}}
\theoremstyle{plain}
\newtheorem{theorem}{Theorem}
\newtheorem{lemma}[theorem]{Lemma}
\newtheorem{corollary}[theorem]{Corollary}
\newtheorem{conjecture}[theorem]{Conjecture}
\theoremstyle{remark}
\theoremstyle{definition}
\newtheorem*{definition}{Definition} 
\newtheorem*{question*}{Question}
\title{Tile Number and Space-Efficient Knot Mosaics}
\author{Aaron Heap and Douglas Knowles}
\begin{document}

\maketitle
\begin{abstract} In this paper we introduce the concept of a space-efficient knot mosaic. That is, we seek to determine how to create knot mosaics using the least number of non-blank tiles necessary to depict the knot. This least number is called the tile number of the knot. We determine strict bounds for the tile number of a knot in terms of the mosaic number of the knot. In particular, if $t$ is the tile number of a prime knot with mosaic number $m$, then $5m-8 \leq t \leq m^2-4$ if $m$ is even and $5m-8 \leq t \leq m^2-8$ if $m$ is odd.  We also determine the tile number of several knots and provide space-efficient knot mosaics for each of them.
\end{abstract}


\section{Introduction}

Mosaic knot theory is a branch of knot theory that was first introduced by Kauffman and Lomonaco in the paper \textit{Quantum Knots and Mosaics}
\cite{Lom-Kauff} and was later proven to be equivalent to tame knot theory by Kuriya and Shehab in the paper \textit{The Lomonaco-Kauffman Conjecture} \cite{Kuriya}. This approach involves creating a knot mosaic by sectioning off a standard knot diagram into an $n \times n$ array of \textit{mosaic tiles} selected from the collection of eleven tiles shown in Figure \ref{tiles}. Each arc and crossing of the original knot projection is represented by arcs, line segments, or crossings drawn on each tile. These tiles are identified, respectively, as $T_0$, $T_1$, $T_2$, $\ldots$, $T_{10}$. Tile $T_0$ is a blank tile, and we refer to the rest collectively as non-blank tiles.

\begin{figure}[h]
  \centering
  \includegraphics{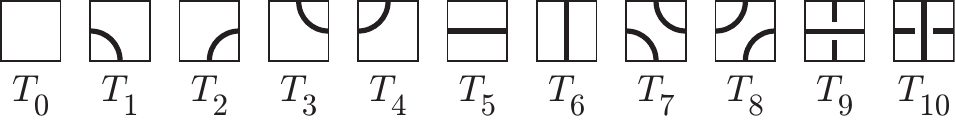}\\
  \caption{Tiles $T_0 - T_{10}$.}
  \label{tiles}
\end{figure}

 A \textit{connection point} of a tile is a midpoint of a tile edge that is also the endpoint of a curve drawn on the tile. A tile is \textit{suitably connected} if each of its connection points touches a connection point of an adjacent tile. Two tiles are \textit{diagonally adjacent} if their array position differs by exactly one row and one column.

\begin{definition}
An $n \times n$ array of tiles is an \textit{$n \times n$ knot mosaic}, or \textit{$n$-mosaic} if each of its tiles are suitably connected.
\end{definition}

Note that an $n$-mosaic could represent a knot or a link, as illustrated in Figure \ref{mosaic-example}. The first two mosaics depicted are $4$-mosaics, and the third one is a $5$-mosaic.

\begin{figure}[h]
  \centering
  \includegraphics{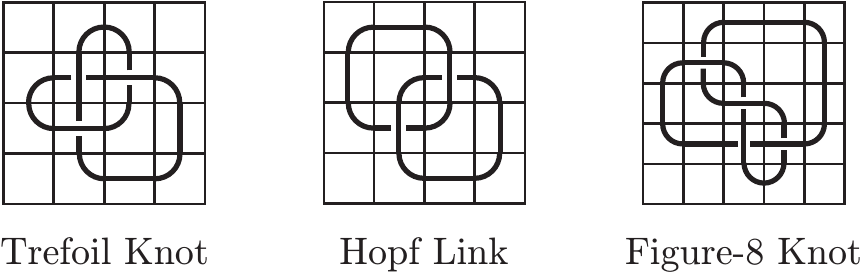}\\
  \caption{Examples of knot mosaics.}
  \label{mosaic-example}
\end{figure}

One particular piece of information of interest is the invariant known as the mosaic number of a knot or link. The \textit{mosaic number} of a knot or link $K$ is the smallest integer $n$ for which $K$ can be represented as an $n$-mosaic. We denote the mosaic number of $K$ as $m(K)$.

Finding bounds on the mosaic number in terms of the crossing number of the knot or link has been a primary focus of research in mosaic knot theory. Lee, Hong, Lee, and Oh, in their paper \textit{Mosaic Number of Knots} \cite{Lee}, found an upper bound for the mosaic number $m$ for a knot or link with crossing number $c$. In particular, for any nontrivial knots and non-split links other than the Hopf link, $m \leq c+1$. In the case of a prime, non-alternating link (except the $6^3_3$ link), they show that $m \leq c-1$.

The mosaic number has previously been determined for every prime knot with crossing number 8 or less. For details, see \textit{Knot Mosaic Tabulations} \cite{Lee2} by Lee, Ludwig, Paat, and Peiffer. In particular, it is known that the mosaic number of the unknot is 2, the mosaic number of the trefoil knot is 4, and the mosaic number of the figure-8 knot (among others) is 5. Every prime knot with eight crossings or less has mosaic number at most 6. In \cite{Heap2}, the authors determine all prime knots that have mosaic number at most 6.

As we work with knot mosaic diagrams, we can move parts of the knot around within the mosaic via \textit{mosaic planar isotopy moves} to obtain another knot mosaic diagram that does not change the knot type of the depicted knot. These are analogous to the planar isotopy moves used to deform standard knot diagrams. A complete list of all of these moves are given and discussed in \cite{Lom-Kauff} and \cite{Kuriya}.

We also point out that throughout this paper we make use of the software package KnotScape \cite{Thistle}, created by Thistlethwaite and Hoste, to verify that a given knot mosaic represents a specific knot.

\section{Space-Efficient Knot Mosaics}

Any given knot can be represented as a knot mosaic in many different ways, sometimes represented on a mosaic that is larger than necessary or inefficiently represented with unnecessary features or empty space within the diagram causing the mosaic to have more non-blank tiles than absolutely necessary. In this paper, we want to explore, in some sense, the most efficient way to represent a knot as a knot mosaic. A few examples of the trefoil knot are given in Figure \ref{ineff-mosaics}. It seems clear that the middle two knot mosaics are not represented in an overly efficient way. The first and last knot mosaics in Figure \ref{ineff-mosaics} are similar to each other, but the first uses thirteen non-blank tiles and the last uses only twelve non-blank tiles. Of the four mosaics depicted, the last one uses the least amount of space within the smallest possible mosaic.

\begin{figure}[h]
  \centering
  \includegraphics{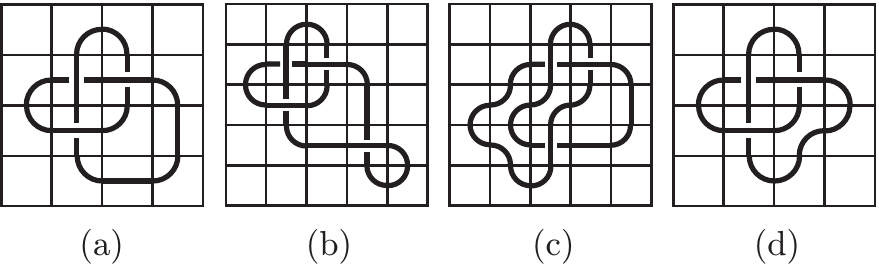}\\
  \caption{Examples of trefoil knot mosaics.}
  \label{ineff-mosaics}
\end{figure}

\begin{definition}
A knot mosaic is called \textit{minimal} if it is a realization of the mosaic number of the knot. That is, a knot with mosaic number $n$ is depicted as an $n$-mosaic.
\end{definition}

\begin{definition}
A knot mosaic is called \textit{reduced} if there are no unnecessary, reducible crossings in the knot mosaic diagram. That is, we cannot draw a simple, closed curve on the knot mosaic that intersects the knot diagram transversely at a single crossing but does not intersect the knot diagram at any other point.
\end{definition}

\begin{definition}
The \textit{tile number of a mosaic} is the number of non-blank tiles (all tiles except $T_0$) used to create that specific mosaic.
\end{definition}

\begin{definition}
The \textit{tile number $t(K)$ of a knot or link} $K$ is the fewest non-blank tiles needed to construct $K$. That is, it is the smallest possible tile number of all possible mosaic diagrams for $K$.
\end{definition}

\begin{definition}
The \textit{minimal mosaic tile number $t_M(K)$ of a knot or link} $K$ is the fewest non-blank tiles needed to construct $K$ on a minimal mosaic. That is, it is the smallest possible tile number of all possible minimal mosaic diagrams for $K$.
\end{definition}

It is known that the crossing number of a knot cannot always be realized on a minimal mosaic, such as the $6_1$ knot. Ludwig, Evans, and Paat \cite{Ludwig} constructed an infinite family of such knots. Similarly, the authors show in \cite{Heap2} that the tile number of a knot cannot always be realized on a minimal mosaic, using the $9_{10}$ knot as an example with mosaic number 6 and $t_M(9_{10})=32$, but $t(9_{10})=27$ is realized on a 7-mosaic.

\begin{definition}
A knot $n$-mosaic is \textsl{space-efficient} if it is reduced and if the tile number has been minimized on an $n \times n$ mosaic through a sequence of planar isotopy moves when the size of the mosaic remains unchanged.
\end{definition}

\begin{definition}
A knot mosaic is \textsl{minimally space-efficient} if it is minimal and space-efficient.
\end{definition}

The knot mosaic depicted in Figure \ref{ineff-mosaics}(a) is minimal but not space-efficient because we can take the arc that passes through the bottom, right corner tile of the mosaic and push it into the diagonally adjacent tile location, thus decreasing the number of non-blank tiles used in the mosaic, and the result is the knot mosaic depicted in Figure \ref{ineff-mosaics}(d), which is minimally space-efficient.

On a minimally space-efficient knot mosaic, the minimal mosaic tile number of the depicted knot must be realized, but the tile number of the knot might not be realized. There may be a larger, non-minimal knot mosaic that uses fewer non-blank tiles, meaning that a space-efficient knot mosaic need not be minimally space-efficient.

\section{Bounds for the Tile Number}\label{counting-tools}

As we seek to determine the tile numbers of knots and find minimally space-efficient knot mosaics for them, we will be working with a large number of possible placements of tiles on a mosaic. To help us simplify explanations and figures, we adopt a few conventions. In particular, we will make use of \textit{nondeterministic tiles} when there are multiple options for the tiles that can be placed in specific tile locations of a mosaic. We will usually denote these as dashed arcs or line segments on the tile. Some examples of these are shown in the first five tiles of Figure \ref{nondeterministic}.

\begin{figure}[h]
  \centering
  \includegraphics{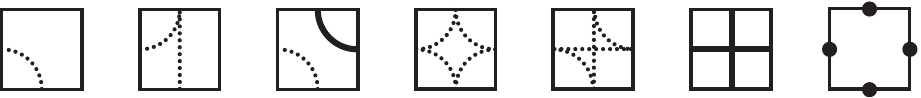}\\
  \caption{Examples of nondeterministic tiles.}
  \label{nondeterministic}
\end{figure}

The first tile in Figure \ref{nondeterministic} could be a single arc tile or a blank tile. The second one could be a single arc tile or a line segment tile. The third tile could be a single arc tile or a double arc tile, but the depicted solid arc is necessary. The fourth one could be any single arc or double arc tile. The fifth one could be a double arc tile or a crossing tile. The sixth tile shown in Figure \ref{nondeterministic} must be a crossing tile, but the crossing type is not yet determined. A point on the edge of a tile indicates a required connection point for the tile. The last tile in Figure \ref{nondeterministic} must have four connection points, and that tile must be either a double arc tile or a crossing tile.

If there is a connection point at the top or bottom of a tile, we may say that there is a connection point \emph{entering} the row that contains that tile. Similarly, a connection point is entering a column if there is a connection point at the right or left of a tile in that column. The connection point may be referred to as an \emph{entry point} for the row or column. For example, if there is a connection point between a tile in the third row and a tile in the fourth row of a mosaic, then that connection point is an entry point point for the third row and an entry point for the fourth row. If a row or column of a mosaic has at least one non-blank tile in it, we may say that the row or column is \emph{occupied}.

The tiles in the outer most rows and columns are referred to as \emph{boundary tiles}. The \textit{inner board} of an $n \times n$ mosaic is the $(n-2) \times (n-2)$ array of tiles that remain after removing the boundary tiles. The first and last boundary tiles in the first and last row of the mosaic are called \emph{corner tiles}. Suppose there are two adjacent single arc tiles that share a connection point, and the other connection points enter the same adjacent row or column. The four options are shown in Figure \ref{caps}, and we will refer to these collectively as \emph{caps} and individually as \emph{top caps}, \emph{right caps}, \emph{bottom caps}, and \emph{left caps}, respectively.
\begin{figure}[h]
  \centering
  \includegraphics{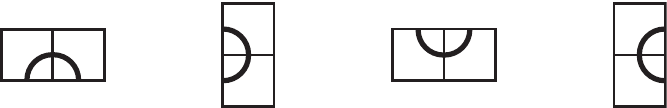}\\
  \caption{A top cap, right cap, bottom cap, and left cap, respectively.}
  \label{caps}
\end{figure}

Equipped with this terminology, we consider the following lemmas that will assist us in counting the minimum number of non-blank tiles necessary to create knot mosaics. We point out that some of these apply to mosaics of any knots and links, while others only apply to mosaics of prime knots. This first lemma tells us that we can create all of our space-efficient knot mosaics without using the corner tile locations. Because the the outer rows and columns need not be occupied, we may assume that the first tile and the last tile in the first occupied row and column is a blank tile, and similarly for the last occupied row and column.

\begin{lemma}\label{no-corners}
Suppose we have a space-efficient $n$-mosaic with $n \geq 4$ and no unknotted, unlinked link components. Then the four corner tiles are blank $T_0$ tiles (or can be made blank via a planar isotopy move that does not change the tile number). The same result holds for the first and last tile location of the first and last occupied row and column.
\end{lemma}

\begin{proof}
We prove that the top, left corner must be blank, and the proof for the remaining three corners of the mosaic is similar. If the top, left tile is not blank, it must be the single arc $T_2$ tile. We simply run through the eleven possible mosaic tiles that could be placed in diagonally adjacent tile position. Each option is depicted in Figure \ref{no-corners1}. In each case except the first one, the mosaic either has a trivial unlinked component or is not space-efficient. In the first case, we can push the arc tile in the corner position into the diagonally adjacent tile position without changing the tile number.

\begin{figure}[h]
  \centering
  \includegraphics{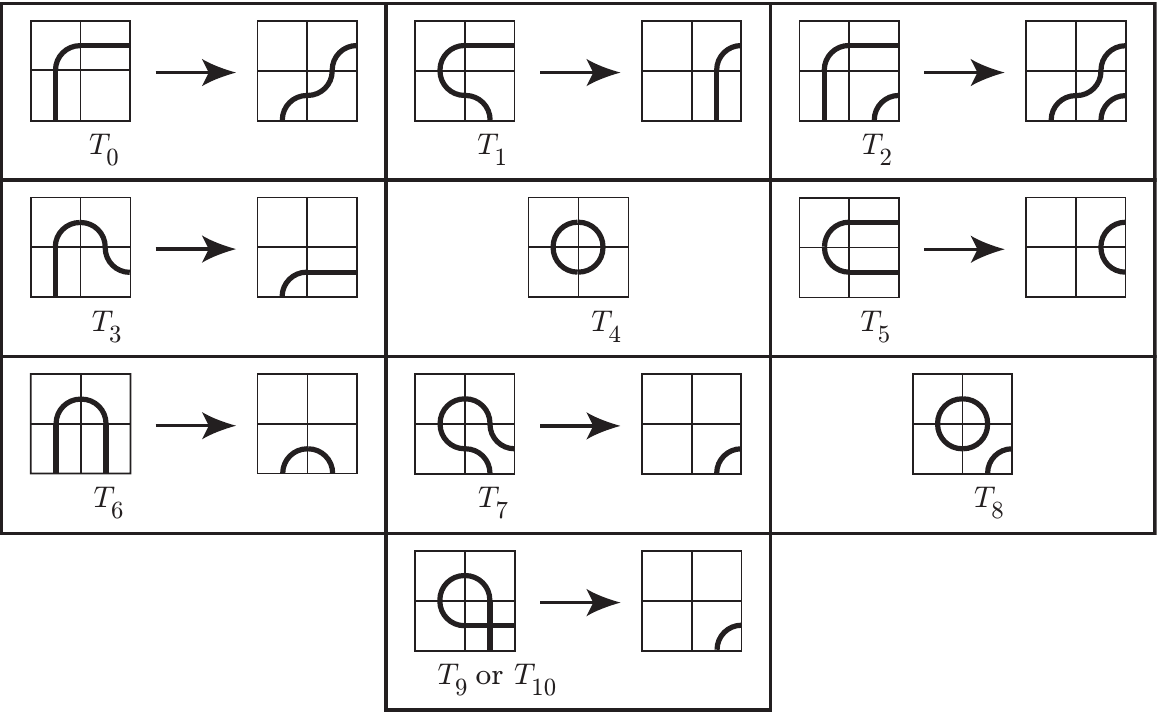}\\
  \caption{All possible upper, left $2 \times 2$ sub-arrays if the upper, left corner is not blank.}
  \label{no-corners1}
\end{figure}

We note that none of this argument hinges on the fact that the assumed blank tile must be in the first row and first column. It only requires that the rows above it and the columns to the left of it are blank. Thus, the result applies not only to the tile in the first row and first column, but also to the tile in the first occupied row and column.
\end{proof}

\begin{lemma}\label{even-connections} For any knot mosaic, if a row (or column) is occupied, then there are at least two non-blank tiles in that row (or column). In fact, there are an even number of entry points between any two rows (or columns).
\end{lemma}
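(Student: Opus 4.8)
The plan is to establish the parity statement first, since the lower bound of two non-blank tiles will drop out of it. The key observation is that every non-blank tile among $T_1,\ldots,T_{10}$ has an \emph{even} number of connection points: the single-arc and line-segment tiles each have exactly two, while the double-arc and crossing tiles each have exactly four, and a blank tile has none. Consequently, the total number of connection points among the tiles of any prescribed collection of tiles is even.

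To prove the parity claim, I would fix a horizontal grid line $\ell$ separating rows $1$ through $i$ from the remaining rows, and let $S$ be the set of tiles lying in rows $1$ through $i$. Summing the connection points over all tiles of $S$ gives an even number by the observation above. I then classify each connection point by the edge on which it lies. A connection point on an edge interior to $S$ (a vertical edge shared by two tiles of one row, or a horizontal edge shared by two rows both at most $i$) is counted twice, since both incident tiles belong to $S$. A connection point lying on $\ell$ itself is counted exactly once, because only the tile immediately above it belongs to $S$. Finally, since the mosaic is suitably connected, no connection point can lie on the outer boundary of the mosaic (it would have no adjacent tile to meet), so the top, left, and right boundary edges contribute nothing. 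Writing the even sum as (number of entry points on $\ell$) $+\,2N$, I conclude that the number of entry points between rows $i$ and $i+1$ is even. Transposing the argument handles columns, which gives the ``in fact'' statement.

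For the first assertion, suppose some row is occupied but contains exactly one non-blank tile, in position $(i,j)$. Its horizontal neighbors $(i,j-1)$ and $(i,j+1)$ are then blank (or lie off the board) and present no connection points, so, being suitably connected, the tile at $(i,j)$ can carry no connection point on its left or right edge. Running through the eleven tiles, the only non-blank tile all of whose connection points lie on its top and bottom edges is the vertical line-segment tile. But then exactly one connection point enters the row from above, so the number of entry points between rows $i-1$ and $i$ is odd, contradicting the parity just proved. Hence an occupied row contains at least two non-blank tiles, and likewise for columns.

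I expect the only real obstacle to be the bookkeeping in the second paragraph: arranging the count so that interior connection points are double-counted while those on the separating line are counted once, and being careful that the outer boundary of the mosaic carries no connection points, which is exactly what ``suitably connected'' forces and what makes the count come out even. Everything else reduces to a finite inspection of the eleven tile types in Figure \ref{tiles}.
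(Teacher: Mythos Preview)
Your argument is correct, and it takes a genuinely different route from the paper's. The paper gives a brief topological ``follow the strand'' argument: since each component of the link is a simple closed curve, any strand that crosses the separating grid line from Row~A into Row~B must cross back, so entry points pair off. Your proof instead runs a combinatorial double-count in the spirit of the handshake lemma: each tile contributes an even number of connection points, interior edges are counted twice, the outer boundary contributes nothing by suitable connectedness, and hence the edges along the separating line carry an even count. The paper's approach is shorter and more intuitive but implicitly leans on the fact that the mosaic really depicts a disjoint union of closed curves; your approach is more self-contained, needing only a finite inspection of the eleven tile types and the definition of suitable connectedness, and it makes the parity drop out of pure bookkeeping rather than topology. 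Your derivation of the ``at least two non-blank tiles'' clause is also more explicit than the paper's, which simply declares it obvious.
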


\begin{proof} This lemma should be quite obvious, as knots and link components are simple closed curves. If there is an entry point from Row A into Row B, then a strand of the knot or a link component has entered Row B. In order to connect back to the rest of the knot or link component and complete the circle, that strand must pass back into Row A at some other entry point, necessarily on some other tile, and these entry points must come in pairs. The same is true for columns.
\end{proof}

\begin{lemma}\label{four-below} Suppose we have a space-efficient $n$-mosaic with $n \geq 4$ and no unknotted, unlinked link components. If there is a cap in any row (or column), then the two tiles that share connection points with the cap must have four connection points. The same result holds if the arc tiles in the cap are not adjacent but have one or more line segment tiles between them.
\end{lemma}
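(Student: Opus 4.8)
The plan is to prove the statement for a top cap; the other three cap orientations follow by rotating or reflecting the mosaic (none of these moves changes tile number, reducedness, or the number of components), and the wide-cap case (arcs separated by horizontal line-segment tiles) reduces to the same local moves carried out at the two ends. So suppose a top cap sits in row $i$, columns $j$ and $j+1$, with its two legs entering the tiles $L$ (below the left arc) and $R$ (below the right arc) in row $i+1$; each of $L$ and $R$ therefore carries a connection point on its top edge. Arguing by contradiction, I assume one of them, say $L$, has only two connection points, so its second connection point lies on its left, right, or bottom edge. This gives exactly three candidates for $L$: the single arc joining top to right, the single arc joining top to left, and the vertical line segment.

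The key organizing device is suitable connection along the edge shared by $L$ and $R$: whatever connection point $L$ does or does not present on that edge forces the matching condition on $R$, which sharply limits what $R$ can be. First I would record this constraint in each of the three cases, and then exhibit, for every surviving configuration, one of three contradictions: (i) the cap together with $L$ and $R$ closes up into a simple closed curve, producing an unknotted, unlinked component, against the hypothesis; (ii) the arch becomes a loop meeting the rest of the diagram only at a single crossing of $R$, i.e.\ a reducible crossing in the sense of our definition, contradicting that the mosaic is reduced; or (iii) the strand running over the arch is a detour that can be re-routed through row $i+1$ using strictly fewer non-blank tiles, contradicting space-efficiency. For instance, when both legs continue straight ($L$ and $R$ are vertical segments) the entire cap slides down one row, turning four non-blank tiles into two; when the two arcs both turn outward they can be replaced by horizontal line segments and the cap deleted; and when one leg turns while the other is a double arc of the appropriate type, $R$ can be replaced by a single arc joining the two endpoints where the strand leaves the local picture.

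Having ruled out every two-connection possibility for $L$, the symmetric argument (mirror the picture across its vertical axis) rules out the same for $R$, so both $L$ and $R$ must be double arc tiles or crossing tiles, that is, must have four connection points. I would close by checking the wide-cap version: the interior horizontal segment tiles simply ride along during each of the moves above, so the identical reductions apply at the two ends.

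The main obstacle is that the case analysis does not decouple: the possibilities for $L$ and $R$ are linked through the suitable-connection condition on their shared edge, and the correct reduction (trivial loop, reducible crossing, or strict tile-count decrease) differs from branch to branch. The most delicate points are verifying that each re-routing is a genuine planar isotopy that fixes the two endpoints at which the strand exits the local picture while strictly lowering the tile number, and justifying rigorously that the disk bounded by the arch traps no other strand, so that the crossing in case (ii) really is reducible.
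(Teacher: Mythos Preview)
Your proposal is correct and follows essentially the same strategy as the paper: assume one of the two tiles beneath the cap has only two connection points, enumerate the finitely many suitably-connected local configurations, and show each either closes off a trivial unlinked component or violates space-efficiency; then repeat for the extended cap with intervening horizontal segments. The paper carries this out by displaying the configurations in figures and declaring each ``not space-efficient,'' whereas you organize the enumeration via the suitable-connection constraint on the shared $L$--$R$ edge and explicitly separate the three contradiction types (trivial loop, reducible crossing, strict tile-count drop); this is a cleaner bookkeeping device but not a different argument.
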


\begin{figure}[h]
  \centering
  \includegraphics{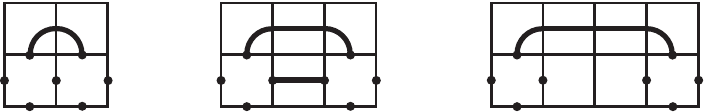}\\
  \caption{If there is a strand of a knot or link in a row (or column) with both entry points coming from the same row (or column), then the tiles that share these entry points must have four connection points.}
  \label{four-below-1}
\end{figure}

\begin{proof}  We focus on rows, and the result for columns follows via a rotation of the mosaic. Suppose we have a top cap, as in the first diagram of Figure \ref{four-below-1}. We need to prove that the two tiles just below it must both have four connection points. Assume at least one of these tiles only has two connection points. Then each of the possibilities, except those resulting in a trivial unlinked link component, are shown in Figure \ref{four-below-2}, and they are not space-efficient since the tile number can be decreased. So both tiles connecting to the arcs must have four connection points. The cases involving the other types of caps are covered by a rotation of this.

\begin{figure}[h]
  \centering
  \includegraphics{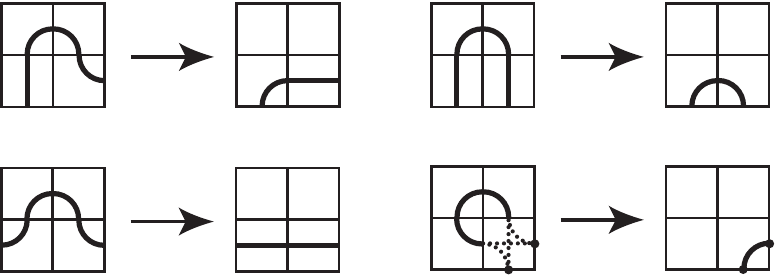}\\
  \caption{If tiles in the second row do not both have four connection points, the mosaics are not space-efficient or have an unnecessary loop.}
  \label{four-below-2}
\end{figure}

Now suppose the two arcs in the cap are not adjacent but connected by a horizontal line segment, as in the second diagram in Figure \ref{four-below-1}.  Again we need to prove that the two tiles below the arc tiles must both have four connection points. Assume at least one of these tiles only has two connection points. Each of the possibilities, except those resulting in a trivial unlinked link component, are shown in Figure \ref{four-below-3}, and none of them are space-efficient. So both tiles connecting to the arcs must have four connection points.
\begin{figure}[h]
  \centering
  \includegraphics{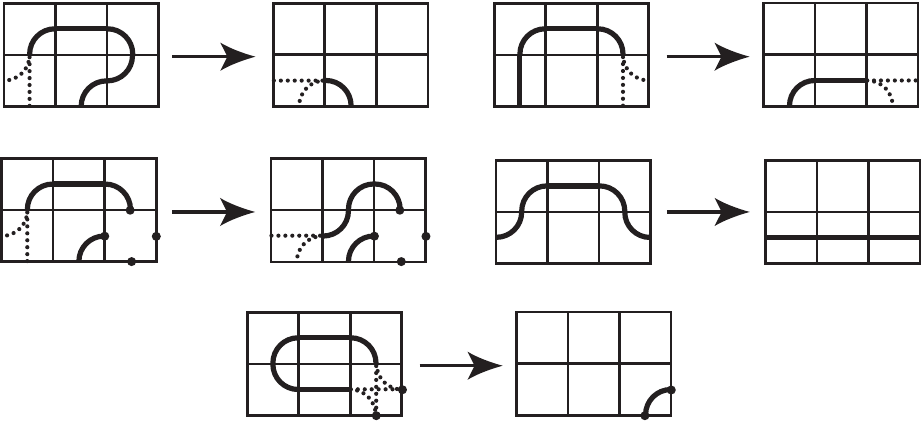}\\
  \caption{If tiles in the second row do not both have four connection points, they are not space-efficient.}
  \label{four-below-3}
\end{figure}
The cases are similar if there is more than one line segment tile connecting the two arc tiles, and again, the cases involving the other caps are covered by a rotation of this.
\end{proof}

\begin{lemma}\label{two-arcs1} Suppose we have a space-efficient mosaic of a prime knot. If there is an occupied row (or column) with less than four non-blank tiles, then the mosaic can be simplified so that the row (or column) has exactly two non-blank tiles in the form of a cap.
\end{lemma}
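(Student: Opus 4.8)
The plan is to prove the statement for rows and then obtain the column case by rotating the mosaic a quarter turn, exactly as in Lemmas~\ref{no-corners} and~\ref{four-below}. I would argue by a short classification of what an occupied row $R$ with few tiles can look like. By Lemma~\ref{even-connections} an occupied row has at least two non-blank tiles and an even number of entry points on each of its two sides, so ``fewer than four'' means $R$ has exactly two or exactly three non-blank tiles. The first step is the basic bookkeeping: each non-blank tile has either two or four connection points, and since the horizontal connection points of a row must pair off between horizontally adjacent tiles, the number of single-arc tiles in $R$ is even. As $R$ has at most three tiles, it therefore contains either zero or two single arcs. The target configuration, a cap, is precisely two adjacent single arcs whose remaining two connection points enter the same neighboring row, so the goal is to show that every other possibility either collapses to this after a planar isotopy or is excluded by space-efficiency and primeness.

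Next I would eliminate four-connection-point tiles (crossings and double arcs) from $R$. Such a tile has two horizontal connection points, so it forces non-blank tiles immediately to its left and to its right; with only two or three tiles available this pins down the entire row up to the reflections and rotations already used in the earlier lemmas. Running through these few local pictures, each one either contains a reducible (nugatory) crossing and is forbidden because a space-efficient mosaic is reduced, or bounds a trivial unlinked loop and is excluded by the standing hypothesis of no unknotted, unlinked components, or admits a planar isotopy that strictly lowers the tile number and hence was not space-efficient to begin with. This leaves rows built only from single arcs and line segments, with exactly zero or two single arcs. When $R$ contains two single arcs, their lone horizontal connection points have nowhere else to attach, so the arcs must be adjacent and share that connection point; their two remaining connection points then either enter a common neighboring row, giving a cap outright, or split to opposite rows, giving an $S$-shaped strand passing straight through $R$.

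The heart of the argument, and the step I expect to be the main obstacle, is this passing-strand situation, together with the case of zero single arcs, in which $R$ consists only of line segments and nothing turns around inside it. Here I would track where a passing strand first reverses direction. Space-efficiency should force any such turn-around to sit in a row immediately adjacent to $R$, for otherwise the vertical segments stacked between $R$ and that turn-around could be shortened by a planar isotopy, contradicting minimality of the tile number; Lemma~\ref{four-below} can be invoked to control the tiles beneath a cap while carrying out this slide. Sliding the adjacent cap into $R$ then replaces the passing strand by a cap without increasing the tile count. Primeness enters precisely to guarantee that the strands threading $R$ belong to a single component that genuinely closes up, rather than to two components that could be split off, so the slide is legitimate and does not silently change the link type. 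Finally, any surplus third tile remaining in $R$ after these moves is an isolated segment whose connection points can be absorbed into an adjacent row by a further planar isotopy, reducing $R$ to exactly the two-tile cap; the three-tile configurations are thereby shown to admit a tile-reducing move and so cannot persist in a space-efficient mosaic. The delicate point throughout is to verify that each sliding move genuinely preserves the mosaic size and never raises the tile number, which is where I anticipate the bulk of the casework, and where the interplay between the ``reduced,'' ``space-efficient,'' and ``prime'' hypotheses must be used simultaneously rather than one at a time.
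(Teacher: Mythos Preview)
Your outline follows the same enumeration strategy as the paper, but there are two genuine gaps.

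First, the claim that the two single arcs ``must be adjacent and share that connection point'' is false: a horizontal segment tile $T_5$ has two horizontal connection points and can sit between the two arcs, giving a stretched cap $[\,\text{arc}\,][T_5][\,\text{arc}\,]$. This is exactly the paper's ``second option'' in Figure~\ref{two-arcs}, and it is the most delicate case. The paper handles it by invoking Lemma~\ref{four-below} to force a horizontal segment in the next row as well, then tracking the first tile in that column that is not blank or a horizontal segment, and performing a column-collapse (Figure~\ref{two-arcs-collapse}). Your proposal never confronts this configuration; the later remark about a ``surplus third tile'' being an ``isolated segment'' does not apply, since $T_5$ here is not isolated.

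Second, you misidentify the role of primeness. You say it ``enters precisely to guarantee that the strands threading $R$ belong to a single component,'' but that is simply the hypothesis that $K$ is a knot. Where primeness is actually needed is to eliminate the configurations with exactly two entry points on the top and two on the bottom of $R$---in particular $[\,T_1\,][T_7][\,T_3\,]$ and the $S$-shape plus an isolated $T_6$. These do \emph{not} contain a nugatory crossing, do \emph{not} bound a trivial unlinked loop, and do \emph{not} admit a local tile-reducing isotopy in general. The paper's argument is that the two top entry points cut off a $1$-string tangle above $R$; primeness forces either that tangle or the one below to be trivial, after which the row collapses. Your paragraph eliminating four-connection-point tiles asserts these cases fall to one of your three mechanisms without primeness, which is incorrect for the double-arc middle tile; and your later passing-strand discussion invokes primeness for the wrong reason.
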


\begin{proof} If a row is occupied and there are less than four non-blank tiles in the row, then there are either two or three non-blank tiles in the row by Lemma \ref{even-connections}. By the same lemma, there must be an even number of entry points at the top of the row and at the bottom of the row. As there are no more than three non-blank tiles, this means there are either zero or two entry points at the top of the row and zero or two entry points at the bottom of the row. If all of the non-blank tiles are vertical segment tiles, this row can be collapsed by shifting the rows below it upward. All other possibilities, up to rotation or reflection, are shown in Figure \ref{two-arcs}.

\begin{figure}[h]
  \centering
  \includegraphics{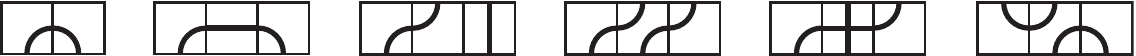}\\
  \caption{Only possibilities to have less than four non-blank tiles in a single row.}
  \label{two-arcs}
\end{figure}

The last possibility results in at least two (unlinked) link components. Because the mosaic must depict a prime knot, each of the third, fourth, and fifth possibilities in Figure \ref{two-arcs} is not space-efficient, as the portion of the knot either above or below this row must be unknotted and would simplify to one of the first two possibilities. Also, the knot mosaic with the fifth possibility is not reduced, as the crossing can be removed by a flip.

Consider the second option in Figure \ref{two-arcs}, with a horizontal line segment between the two single are tiles. We claim that the mosaic is either not space-efficient or the horizontal segment can be collapsed in a way that does not change the tile number. Because there are no other non-blank tiles in this row and the knot mosaic does not depict a link, we know all tiles above this row must be blank. Lemma \ref{four-below} tells us that this row and the row below it must be as in the second picture of Figure \ref{four-below-1}, with two horizontal segment tiles in the same column. If all of the tiles in this column below the horizontal segments are blank or horizontal line segment tiles, then the mosaic is not space-efficient as the tile number can be decreased by collapsing this column. Consider the first tile in this column that is not blank or a horizontal segment. Because the tile above it is blank or a horizontal line segment, this tile can only be a single arc tile $T_1$ or $T_2$. In either case, the horizontal segment tiles can be collapsed without changing the tile number or the mosaic is not space-efficient, as the tile number can be decreased by collapsing the horizontal segments. If the first non-blank, non-horizontal tile is the $T_1$ arc tile, then the collapse of the horizontal segments is done as in one of the options in Figure \ref{two-arcs-collapse}, possibly with more blank or horizontal segment tiles above the arc tile. Rotations and reflections of these cover all other cases.

\begin{figure}[h]
  \centering
  \includegraphics{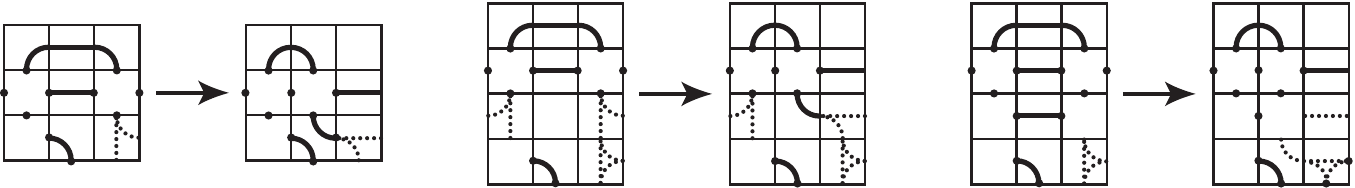}\\
  \caption{Collapsibility of horizontal segments.}
  \label{two-arcs-collapse}
\end{figure}

Therefore, since the mosaic is space-efficient, we can always alter the given mosaic via planar isotopy moves so that any row or column with less than four non-blank tiles has exactly two non-blank tiles in the form of a cap, and this alteration does not change the tile number.
\end{proof}

Because of this lemma, in a space-efficient mosaic of a prime knot, we may assume that every occupied row and column has at least four non-blank tiles or can be simplified to a single cap.

\begin{corollary} Every space-efficient mosaic of a prime knot can be drawn so that every row and column has either 0, 2, 4, or more non-blank tiles.
\end{corollary}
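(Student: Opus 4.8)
The plan is to read the corollary off the two preceding counting lemmas, treating rows and columns on an equal footing since a rotation of the mosaic interchanges them. I would fix an arbitrary space-efficient mosaic of a prime knot and examine one row at a time. An unoccupied row contributes $0$ non-blank tiles and needs no attention. If the row is occupied, Lemma \ref{even-connections} immediately gives at least two non-blank tiles, so the count $1$ is already impossible; this disposes of the gap between $0$ and $2$.

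The essential step is to rule out a stable odd count, the only remaining bad value below four being $3$. Here I would apply Lemma \ref{two-arcs1}: any occupied row with fewer than four non-blank tiles (hence exactly two or exactly three) can be altered by planar isotopy moves, without increasing the tile number, so that it becomes a single cap with exactly two non-blank tiles. Performing this on every short row, and then repeating the argument after a rotation for the columns, leaves every line of the mosaic with $0$, $2$, or at least $4$ non-blank tiles, which is exactly the claim. Because each move in Lemma \ref{two-arcs1} preserves reducedness and does not increase the tile number, the final mosaic is still space-efficient.

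The point requiring care---and the one I expect to be the main obstacle---is that the simplifications are local: reducing a short row to a cap (or collapsing a row of vertical segments) removes tiles from the columns they met, so in principle it could turn a column of four tiles into a column of three and thereby create a new short line. I would resolve this by iterating and arguing termination rather than claiming a one-pass fix. Each application of Lemma \ref{two-arcs1} either strictly decreases the tile number or, when the tile number is preserved, strictly decreases an auxiliary measure such as the number of occupied rows and columns (a cap-reduction or collapse vacates a line). Since both quantities are non-negative integers and neither can increase indefinitely, the process terminates, and at termination no row or column carries fewer than four non-blank tiles unless it has exactly two or is empty. This yields a space-efficient drawing in which every row and column has $0$, $2$, $4$, or more non-blank tiles.
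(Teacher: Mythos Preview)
Your approach is exactly what the paper does—in fact the paper supplies no separate proof at all, treating the corollary as an immediate consequence of Lemma~\ref{two-arcs1} (together with Lemma~\ref{even-connections} to rule out a single non-blank tile). You go further than the paper by flagging the cascading issue: simplifying a short row may shorten a column, so one needs to iterate. The paper is silent on this point.

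Your proposed termination measure, however, is not quite airtight. Reducing a three-tile row to a cap does not vacate that row (the cap remains there), and since the mosaic is already space-efficient the total tile number cannot strictly drop under any planar isotopy move; so neither coordinate of your lexicographic pair is forced to decrease at a tile-number-preserving step. A cleaner way to close the loop is to argue by minimality rather than by a descending process: among all space-efficient $n$-mosaics of $K$ (a finite set, since the tile number is fixed), choose one minimizing the number of rows and columns with exactly three non-blank tiles, and observe that if this number were positive then the explicit moves in the proof of Lemma~\ref{two-arcs1} (Figures~\ref{two-arcs} and~\ref{two-arcs-collapse}) would produce another space-efficient mosaic with strictly fewer such lines. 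This requires checking that those specific shifts do not manufacture a new three-tile line, which is a finite case check the paper also does not carry out.
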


\begin{lemma}\label{top-caps} Suppose we have a space-efficient $n$-mosaic of a knot or link. Then the first occupied row of the mosaic can be simplified so that the non-blank tiles form only top caps. In fact, there will be $k$ top caps for some $k$ such that $1 \leq k \leq (n-2)/2$. Similarly, the last occupied row is made up of bottom caps, and the first and last occupied columns are made up of left caps and right caps, respectively.
\end{lemma}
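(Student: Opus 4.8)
The plan is to exploit the fact that no strand can exit the top of the first occupied row, and then to peel off caps one at a time. First I would observe that, since every row above the first occupied row is blank, there can be no entry point along the top edge of any tile in that row, as such a connection point would have to meet a blank tile. Consequently every non-blank tile in the first occupied row has connection points only among its left, right, and bottom edges. This immediately rules out every tile with four connection points (the double arc and crossing tiles) as well as the vertical line segment, so the only available non-blank tiles are the two single arc tiles that join the bottom edge to a side edge and the horizontal line segment.

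Next I would pin down the left-to-right pattern. By Lemma \ref{no-corners} the first and last tiles of the first occupied row are blank, so the leftmost non-blank tile lies in a column whose left neighbor is blank; having neither a top nor a left connection point, it must be the single arc running from its bottom edge to its right edge. Following the strand rightward through any horizontal segments, it cannot turn upward, so it can only terminate at a single arc running from the left edge back down to the bottom; this pair of arcs, together with any segments between them, is exactly the non-adjacent cap configuration of Lemma \ref{four-below}. Repeating this, the arcs alternate in orientation and group into humps, and by Lemma \ref{even-connections} the number of bottom entry points, hence the number of arc tiles, is even, so every arc belongs to some hump.

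The main step, and where I expect the real work, is collapsing the horizontal segments inside each hump so that its two arcs become adjacent and the hump becomes a genuine top cap. Here I would reuse the collapsibility argument from Lemma \ref{two-arcs1}: Lemma \ref{four-below} forces four-connection tiles directly beneath the two arcs, and inspecting the first non-blank, non-horizontal tile in a column beneath one of the horizontal segments shows that either the segments slide out via a planar isotopy that leaves the tile number unchanged, or the tile number could be strictly decreased, contradicting space-efficiency. The subtlety to handle carefully is that this reduction must be valid for links as well as knots and must not introduce a trivial unlinked component; once it is carried out, each hump is a top cap built from two adjacent arc tiles.

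Finally I would count and invoke symmetry. Each top cap occupies exactly two columns, distinct caps are column-disjoint, and by Lemma \ref{no-corners} they all lie in columns $2$ through $n-1$; hence $2k \le n-2$, giving $k \le (n-2)/2$, while the row being occupied forces $k \ge 1$. Rotating the mosaic through successive quarter turns carries the first occupied row to the last occupied row and to the first and last occupied columns, converting top caps into bottom caps, left caps, and right caps, which yields the three remaining statements.
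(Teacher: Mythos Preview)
Your proposal is correct and follows essentially the same route as the paper: identify that the first occupied row can only contain the two downward-turning arc tiles and horizontal segments, group these into ``humps,'' collapse the intervening horizontal segments, count caps against the $n-2$ available columns, and finish by rotation.  The one place the paper is more explicit than you is the collapse step when a hump contains \emph{two or more} horizontal segments: rather than iterating the single-segment slide from Lemma~\ref{two-arcs1}, the paper gives a separate planar isotopy that removes two consecutive horizontal segments at once (pushing them down past the forced four-connection tiles beneath the arcs), and then reduces the general case to either zero or one remaining segment.  Your plan to reuse the Lemma~\ref{two-arcs1} argument column-by-column can be made to work, but you should be aware that the structure in a column beneath an \emph{interior} horizontal segment is not immediately pinned down by Lemma~\ref{four-below} alone, so a little extra care (or the paper's pair-removal move) is needed there; also note that the part of Lemma~\ref{two-arcs1}'s proof you are borrowing does not actually use primality once you already know everything above the row is blank, so your concern about extending it to links is easily resolved.
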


\begin{proof} Because we are considering the first occupied row of the mosaic, there can be no connection points along the top of the row. So the row must consist entirely of blank tiles, top caps, or $T_1$ and $T_2$ single arc tiles separated by any number of horizontal segment tiles. If there is only one horizontal segment tile between the arc tiles, this can be reduced to a top cap without changing the tile number via the same argument in the proof of Lemma \ref{two-arcs1}. If there are two horizontal segment tiles between the arc tiles, then the we can eliminate them via a planar isotopy move without changing the tile number. Because of space-efficiency, eventually there must be single arc tiles below the two horizontal segments. We have shown in Figure \ref{top-caps-1} the planar isotopy moves for the cases where this occurs in the second or third occupied row. If it happens in a later row, the moves are similar.

\begin{figure}[h]
  \centering
  \includegraphics{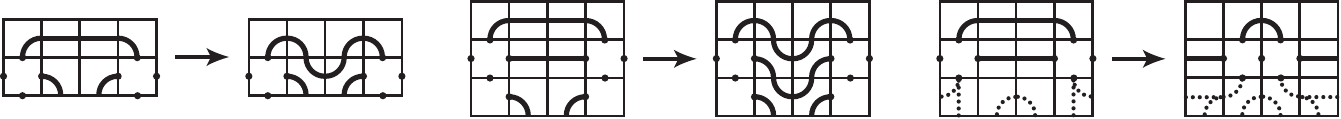}\\
  \caption{Simplifying to top caps.}
  \label{top-caps-1}
\end{figure}

If there are more than two horizontal segment tiles between the arc tiles in the first occupied row, we can eliminate consecutive pairs as above, reducing the number of horizontal segment tiles between two arc tiles to one or none, and we can eliminate the single horizontal line segments as we did in the proof of Theorem \ref{two-arcs1}. In any case, we are able to reduce everything to a collection of top caps.

Because there are $n$ tile locations in the first occupied row and, by Lemma \ref{no-corners}, we can assume the first and last tiles in this row are blank, there are only $n-2$ tiles to place the top caps. Therefore, there are at most $(n-2)/2$ top caps. Rotations of this prove the result for the first and last occupied columns and rows.
\end{proof}

\begin{lemma}\label{mid-rows} Suppose we have a space-efficient mosaic of a prime knot with at least five occupied rows. Then every occupied row except the first two and last two occupied rows has at least five non-blank tiles.
\end{lemma}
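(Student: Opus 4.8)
The plan is to fix an occupied row $R$ that is neither of the first two nor of the last two occupied rows, so that there are at least two occupied rows above $R$ and at least two below it, and to show $R$ cannot have exactly $2$ or exactly $4$ non-blank tiles. By the corollary to Lemma~\ref{two-arcs1}, every row of a space-efficient mosaic of a prime knot can be taken to have $0$, $2$, $4$, or at least $5$ non-blank tiles; since $R$ is occupied, it suffices to exclude the values $2$ and $4$. The first thing I would record is a connectivity fact: because the knot is a single connected curve with occupied rows both above and below $R$, some strand must travel from the region above $R$ to the region below it, and the only way to cross between these regions is through connection points on the top and bottom edges of the tiles of $R$. Hence $R$ has at least one top entry point and at least one bottom entry point, and by Lemma~\ref{even-connections} each of these counts is even, so $R$ has at least two top entry points and at least two bottom entry points.

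Next I would set up a bookkeeping framework for the strands inside $R$. Since left and right connection points of a tile connect only to horizontally adjacent tiles in the same row, following a strand through $R$ always joins a top or bottom entry point to another top or bottom entry point. Thus the tiles of $R$ partition its entry points into \emph{pass-throughs} (a top entry joined to a bottom entry), top caps (two bottom entries joined by a $\cap$), and bottom caps (two top entries joined by a $\cup$); write $p$, $d$, $u$ for their numbers. Then the number of top entries is $p+2u$ and the number of bottom entries is $p+2d$, both even and at least $2$, and the connecting strand from the first paragraph is a pass-through, so $p\geq 1$ and therefore $p\geq 2$. Each pass-through uses at least one non-blank tile and each cap at least two, and a row all of whose non-blank tiles are vertical segments (the case $u=d=0$ with every pass-through straight) can be collapsed by shifting the lower rows upward, which contradicts space-efficiency.

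With this in hand, excluding $2$ tiles is short. To obtain two top and two bottom entry points from only two tiles, each tile must carry exactly one top and one bottom connection point, so each is a vertical segment, a double-arc tile, or a crossing tile. Two vertical segments form a collapsible row, and a double-arc or crossing tile has left and right connection points that cannot be suitably connected when only two non-blank tiles are present; hence $2$ is impossible. For exactly four tiles, the framework leaves three configurations: (C) four vertical segments, which is collapsible exactly as above; (B) two vertical segments together with a single cap built from two arc tiles; and (A) all four tiles consumed by two pass-throughs realized with arcs, i.e.\ a diagonal ``staircase'' jog (possibly with an intervening horizontal segment tile) rather than a straight vertical.

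The heart of the argument, and where I expect the real work to lie, is excluding (A) and (B); this is exactly where space-efficiency, reducedness, and primeness must be used. For (B) the lone cap is an avoidable extremum: by Lemma~\ref{four-below} the two tiles supporting it carry four connection points, and the same collapse moves used in the proofs of Lemmas~\ref{two-arcs1} and~\ref{top-caps} push the cap out of the row and lower the tile number, contradicting space-efficiency. For (A) each diagonal pass-through is a staircase that a planar isotopy move straightens, again lowering the tile count (after which any surviving all-vertical row collapses as in (C)). I anticipate case (A) to be the main obstacle: one must check that every such jog can genuinely be removed without raising the tile count in the rows immediately above or below $R$, which means tracking the tiles adjacent to $R$ and is most safely carried out by an exhaustive, figure-driven case check in the style of the earlier lemmas. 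Once (A), (B), and (C) are all ruled out, a middle occupied row can have neither $2$ nor $4$ non-blank tiles, and therefore has at least five.
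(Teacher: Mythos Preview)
Your plan diverges from the paper's proof and, as written, has a genuine gap in exactly the place you flagged as risky. The paper's argument never tries to classify what can sit inside a middle row with two or four tiles; instead it first proves a stronger connectivity fact: between any two \emph{intermediate} occupied rows there must be at least four connection points. The reason is that if only two connection points separated the rows, a simple closed curve through them would exhibit the knot as a connect sum, so by primeness one side is an unknotted $1$-tangle occupying at least two rows, contradicting space-efficiency. With $\geq 4$ entry points on both the top and the bottom of a middle row, the conclusion is immediate: more than four entry points on either side already forces more than four tiles, while exactly four on each side either aligns vertically (four vertical segments, hence collapsible) or does not (forcing a fifth column to be used). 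This is a two-line counting argument once the $\geq 4$ fact is in hand.

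Your route only secures $\geq 2$ entry points on each side, and then your cases (A) and (B) are precisely the configurations with exactly two entry points on one side of $R$. The ``straighten the staircase'' move you propose for (A) is not a local, tile-number-nonincreasing planar isotopy: shifting a jog so that the top and bottom entry columns coincide changes where $R$ meets the adjacent rows, so you must modify $R\pm 1$ and there is no reason the net tile count drops. Likewise, the Lemma~\ref{four-below}/Lemma~\ref{two-arcs1} collapse you invoke for (B) was designed for caps in the \emph{first} occupied row (where everything above is blank), not for a cap buried in a middle row with occupied rows on both sides. The exhaustive figure-driven check you anticipate would, if carried out honestly, have to rediscover the primeness obstruction above: what actually kills (A) and (B) is not a local simplification but the fact that two connection points across a middle boundary contradict primeness plus space-efficiency. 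I would recommend replacing your entry-point bookkeeping with that single observation; it eliminates the hard cases outright and matches the paper's short proof.
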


\begin{proof} We begin with a space-efficient mosaic of a prime knot with at least five occupied rows. By Lemma \ref{top-caps}, we know that the first occupied row is made up of top caps. To avoid space-inefficiency, composite knots, and multi-component links, we know there must be at least four connection points between any two rows, except possibly between the first and second occupied rows and between the last two occupied rows. This means that in any one of these intermediate rows, there must be at least four connection points along the top of the row and at least four connection points along the bottom of the row. If there are more than four in any given row, then there are more than four non-blank tiles in that row. Suppose there are exactly four connection points along the top and along the bottom of one of these intermediate rows. If the four connection points at the top of this row are vertically aligned with the four connection points at the bottom of the row, then these four non-blank tiles must all be vertical segment tiles, and the resulting mosaic would not be space-efficient. Thus, they are not vertically aligned, and there are at least five non-blank tiles in this row. Therefore, other than the first two occupied rows and the last two occupied rows, every row must have at least five non-blank tiles.
\end{proof}

These lemmas combine to provide bounds for the tile number. We have an upper bound for the tile number of a general $n$-mosaic of any knot or link, and we have a lower bound for an $n$-mosaic of any prime knot.

\begin{theorem}\label{bounds-mosaic} For $n \geq 4$, suppose we have a space-efficient $n$-mosaic of a knot or link $K$ with no unknotted, unlinked link components, and either every row or every column of the mosaic is occupied. If $n$ is even, then the tile number of the mosaic is less than or equal to $n^2-4$. If $n$ is odd, then the tile number of the mosaic is less than or equal to $n^2-8$. If $K$ is a prime knot, then the tile number is greater than $5n-8$.
\end{theorem}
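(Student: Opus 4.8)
The plan is to prove all three bounds at once by a tile count, after first normalizing the configuration. Since the hypothesis gives that either every row or every column is occupied, and all of the preceding lemmas are stated symmetrically for rows and columns, a $90^\circ$ rotation lets me assume without loss of generality that every one of the $n$ rows is occupied. The upper bounds then come from exhibiting enough forced \emph{blank} tiles, and the prime lower bound comes from a row-by-row tally of forced \emph{non-blank} tiles.

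For the even upper bound there is essentially nothing to do beyond Lemma \ref{no-corners}: the four corners are blank (or can be made blank without changing the tile number), so at least four of the $n^2$ tiles are blank and the tile number is at most $n^2-4$. The odd case is where the parity of $n$ actually does work, and I would extract it from Lemma \ref{top-caps}. The first row, being occupied, is a union of top caps; there are at most $(n-2)/2$ of them, and since $n$ is odd this is at most $(n-3)/2$, using at most $n-3$ non-blank tiles. Hence the first row has at least three blank tiles: the two corners plus at least one more in its interior. The identical count applies to the last row (bottom caps) and to the first and last occupied columns (left and right caps). Collecting the four corners together with one interior blank from each of these four lines yields eight blank tiles; the interior blanks lie in pairwise distinct rows and columns, so all eight are distinct, giving $t \le n^2-8$. (If some column is unoccupied that only adds blanks, so the dense case considered here is the binding one.)

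For the lower bound I assume $K$ prime with every row occupied and partition the $n$ occupied rows into three groups: the first and last rows; the second and second-to-last rows; and the strictly interior rows (rows $3$ through $n-2$). By Lemma \ref{top-caps} each of the first and last rows is a union of caps and so has at least two non-blank tiles, and by Lemma \ref{mid-rows} each strictly interior row has at least five. For the second row I would argue that primeness forces at least four connection points between it and the next row inward: a horizontal cut meeting the knot in only two points would split off a trivial arc (contradicting space-efficiency) or express $K$ as a connected sum (contradicting primeness), so by Lemma \ref{even-connections} the number of such points is an even number that is neither $0$ nor $2$. Because a single tile meets a given horizontal edge in at most one connection point, four connection points require four distinct non-blank tiles in that row; the same applies to the second-to-last row. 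Summing the three groups gives $2\cdot 2 + 2\cdot 4 + 5(n-4) = 5n-8$.

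I expect the main obstacle to be precisely this treatment of the second and second-to-last rows. Lemma \ref{mid-rows} is stated only for mosaics with at least five occupied rows, so when $n=4$ (no strictly interior rows) I cannot cite it and must run the ``at least four connection points between the two middle rows'' argument by hand; I would also need to verify that the three row-groups are disjoint and exhaust all rows for every $n\ge 4$, which holds since $n\ge 4$ keeps row $2$ and row $n-1$ distinct. I should note that the value $5n-8$ is attained — already by the standard twelve-tile trefoil mosaic at $n=4$, where $5n-8=12$ — so the genuine mathematical content of the final claim is the inequality that the tile number is at least $5n-8$, matching the bound $5m-8\le t$ recorded in the abstract.
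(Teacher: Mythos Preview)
Your proof is correct and follows essentially the same row-by-row architecture as the paper: corners blank for the even upper bound, one extra forced blank in each boundary row/column for the odd upper bound, and a $2+4+5(n-4)+4+2$ tally for the prime lower bound. The only differences are local substitutions of lemmas---you extract the odd-case extra blanks from Lemma~\ref{top-caps} (at most $(n-3)/2$ caps) where the paper invokes Lemma~\ref{even-connections} (evenly many non-blank boundary tiles), and you handle the second and second-to-last rows by a direct ``$\ge 4$ connection points or else composite/inefficient'' argument where the paper cites Lemmas~\ref{two-arcs1} and~\ref{four-below}; both routes are equally short. Your closing observation that the trefoil realizes $5n-8$ at $n=4$, so that the inequality is really $t\ge 5n-8$ as in the abstract rather than strict as in the theorem statement, is also correct.
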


\begin{proof} Suppose we have a space-efficient $n$-mosaic of $K$ in which either every row or every column is occupied. By Lemma \ref{no-corners}, we know we do not need to use the corners of the mosaic. In the case where $n$ is odd, Lemma \ref{even-connections} forces one more blank tile in each outer row and column because we can only have an even number of non-blank tiles in each of these. Therefore, the tile number of this mosaic must be less than or equal to either $n^2-4$ or $n^2-8$, depending on whether $n$ is even or odd.

Now suppose that $K$ is a prime knot, and assume every row of the mosaic is occupied. By Lemma \ref{two-arcs1}, we may assume that the first row of the mosaic either has at least four non-blank tiles or has exactly two non-blank tiles in it, a top cap. Assuming the latter, Lemma \ref{four-below} tells us that the next row down has at least four non-blank tiles. Lemma \ref{mid-rows} tells us that the rest of the rows must have at least five non-blank tiles, except possibly the last and next to last rows. At a minimum, since all rows are occupied, the last row must have at least two non-blank tiles (a bottom cap), and the next to last row has at least four non-blank tiles. Thus there are at least two non-blank tiles in the first and last rows, at least four non-blank tiles in the second and next to last rows, and at least five non-blank tiles in each of the $n-4$ intermediate rows, providing a minimum of $5n-8$ non-blank tiles in the $n$-mosaic. A rotation of this gives the same result if every column is occupied.
\end{proof}

\begin{corollary}\label{bounds-knot} Suppose we have a knot or link $K$ with mosaic number $m(K)=m$ for $m \geq 4$ and no unknotted, unlinked link components. If $m$ is even, then $t(K) \leq m^2-4$. If $m$ is odd, then $t(K) \leq m^2-8$. If $K$ is a prime knot, then $t(K) \geq 5m-8$.
\end{corollary}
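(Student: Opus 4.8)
The plan is to derive both inequalities directly from Theorem \ref{bounds-mosaic} by feeding it the right mosaic and taking $n=m$, using only that $m=m(K)$ is the minimal board size on which $K$ can be drawn. The entire content of the corollary is the translation from a statement about a \emph{fixed} space-efficient mosaic to a statement about the knot invariants $t(K)$ and $m(K)$, so the real work is in verifying that the hypotheses of Theorem \ref{bounds-mosaic}, namely space-efficiency together with the condition that every row or every column is occupied, can be arranged in each case.

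For the two upper bounds, I would start from any minimal mosaic of $K$, which exists because $m(K)=m$, and apply planar isotopy moves to make it space-efficient; since those moves do not enlarge the board, the result is still an $m$-mosaic. The key observation I would then establish is that on a minimal mosaic one cannot simultaneously have an empty row and an empty column: an entirely blank row has no connection points on its horizontal edges, so it can be deleted and the rows below shifted up without breaking suitable connectivity, and likewise for an empty column, so deleting one of each would yield an $(m-1)\times(m-1)$ mosaic of $K$, contradicting $m=m(K)$. Hence either every row or every column of this space-efficient $m$-mosaic is occupied, and Theorem \ref{bounds-mosaic} (with $n=m\geq 4$) gives that its tile number is at most $m^2-4$ when $m$ is even and at most $m^2-8$ when $m$ is odd. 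Since $t(K)$ is by definition the minimum tile number over all mosaics of $K$, it is bounded above by the tile number of this particular mosaic, which yields the claimed upper bounds.

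For the lower bound I would proceed dually. Choose a mosaic of $K$ that realizes $t(K)$, and among all such mosaics take one drawn on the smallest possible board, say $N\times N$. Any mosaic of $K$ has board size at least $m$, so $N\geq m\geq 4$. Minimality of the board forces, by the same deletion argument as above, that either every row or every column is occupied. This mosaic is also space-efficient: it is reduced, since removing a reducible crossing would lower the tile number and contradict that it realizes $t(K)$, and its tile number cannot be decreased by a same-size planar isotopy for the same reason. Theorem \ref{bounds-mosaic} then applies to a prime knot to give tile number strictly greater than $5N-8\geq 5m-8$. Because this tile number equals $t(K)$, I would conclude $t(K)>5m-8$, which is in fact stronger than the asserted $t(K)\geq 5m-8$.

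The step I expect to require the most care is confirming that a minimum-tile mosaic genuinely satisfies the hypotheses of Theorem \ref{bounds-mosaic}, rather than merely being a valid mosaic. In particular I would want to check that passing to the smallest board realizing $t(K)$, and the reducedness argument, do not change the knot type or introduce the unknotted, unlinked components that the hypotheses exclude; for a prime knot the latter is automatic, but the bookkeeping that minimum tile number implies both reduced and same-size minimized is exactly where the definitions of \emph{reduced} and \emph{space-efficient} must be invoked precisely.
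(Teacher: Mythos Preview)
Your proposal is correct and follows the same route as the paper: both bounds are read off Theorem~\ref{bounds-mosaic} applied to an appropriate $m$-mosaic. The paper's own proof is a one-line assertion that the bounds ``follow immediately from the theorem, even when the tile number of a prime knot does not occur on a minimal mosaic''; your version is considerably more careful in checking the occupancy and space-efficiency hypotheses that the paper leaves implicit, and your remark that the theorem's strict inequality would actually give $t(K)>5m-8$ is accurate as stated, though the theorem's own proof only establishes $\geq$.
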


\begin{proof} Let $K$ be a knot or link with mosaic number $m$. Then we know $K$ can be drawn on an $m$-mosaic (or larger) but not a smaller mosaic, and the bounds for $t(K)$ follow immediately from the theorem, even when the tile number of a prime knot does not occur on a minimal mosaic.
\end{proof}

Of course the restrictions on the knot or link given in Corollary \ref{bounds-knot} naturally lead to a question about $n$-mosaics of composite knots and multi-component links.

\begin{question*} What are the bounds for the tile numbers of links and composite knots with mosaic number $m$?
\end{question*}

\section{Tile Numbers of Small Knot Mosaics}

Let us first consider the smallest mosaics, that is, $n$-mosaics with $n\leq 5$. We begin with the unknot, which has mosaic number 2.

\begin{theorem} The tile number (and minimal mosaic tile number) of the unknot is $t(\textnormal{unknot})=4$.
\end{theorem}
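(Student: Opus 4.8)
The plan is to prove the two inequalities $t(\textnormal{unknot}) \ge 4$ and $t_M(\textnormal{unknot}) \le 4$ separately, and then observe that together with the trivial inequality $t \le t_M$ they force both quantities to equal $4$.

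For the upper bound I would simply exhibit the standard realization: on a $2 \times 2$ board, place the four single-arc tiles $T_1, T_2, T_3, T_4$ so that their quarter-arcs join into a single circle. This array is suitably connected, represents the unknot, uses exactly four non-blank tiles, and lives on a minimal mosaic since the mosaic number of the unknot is $2$. Hence $t_M(\textnormal{unknot}) \le 4$, and a fortiori $t(\textnormal{unknot}) \le 4$.

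For the lower bound I would argue that every mosaic of the unknot uses at least four non-blank tiles. First I would show that the knot must occupy at least two rows: if only a single row were occupied, then no tile in it could carry a connection point on its top or bottom edge, since there would be nothing to connect to (the adjacent rows being blank or nonexistent). The only tiles whose connection points lie solely on their left and right edges are horizontal line-segment tiles, and a string of these cannot close up into a loop. This contradicts the fact that the unknot is a simple closed curve, so at least two rows must be occupied. By Lemma \ref{even-connections}, each occupied row contains at least two non-blank tiles; applying this to the first and last occupied rows, which are distinct, yields at least $2 + 2 = 4$ non-blank tiles. Therefore $t(\textnormal{unknot}) \ge 4$.

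Combining the two bounds gives $4 \le t(\textnormal{unknot}) \le t_M(\textnormal{unknot}) \le 4$, so both equal $4$. I do not expect any single step to be a serious obstacle; the one point to get right is the clean justification that a closed curve cannot be confined to a single row. The virtue of routing the lower bound through Lemma \ref{even-connections} together with this elementary two-row observation, rather than through the cap lemmas such as Lemma \ref{top-caps}, is that it avoids any hypothesis of the form $n \ge 4$ or any appeal to space-efficiency, and so applies uniformly to every mosaic of the unknot, including large or non-reduced ones.
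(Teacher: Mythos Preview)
Your proposal is correct. The paper's own proof is extremely terse: it simply exhibits the $2\times 2$ realization and asserts without further argument that four is the least possible number of non-blank tiles. Your upper-bound step is identical to the paper's, and your lower-bound argument (at least two occupied rows, each contributing at least two non-blank tiles via Lemma~\ref{even-connections}) supplies the justification that the paper leaves implicit. So the approach is essentially the same, with your version being more complete on the lower bound; nothing is missing or off-track.
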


\begin{proof}  The least number of non-blank tiles necessary to create the unknot is four, and this is shown on a minimal mosaic in the first mosaic of Figure \ref{unknot}.
\end{proof}
\begin{figure}[h]
  \centering
  \includegraphics{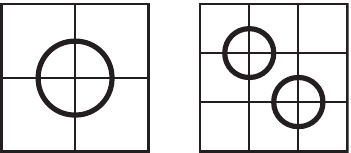}\\
  \caption{The unknot and two component unknotted unlink.}
  \label{unknot}
\end{figure}

Kauffman and Lomonaco \cite{Lom-Kauff} show that the only knots or links that fit on a 3-mosaic are the unknot or the two component unknotted unlink. The latter of which has tile number and minimal mosaic tile number 7, as seen in Figure \ref{unknot}. All other knots and links have mosaic number 4 or more.

\begin{theorem} The tile number (and minimal mosaic tile number) of any knot or nontrivial link with mosaic number 4 is 12. For links with unlinked components and mosaic number 4, the possible tile numbers (and minimal mosaic tile numbers) are 10, 13, and 16.
\end{theorem}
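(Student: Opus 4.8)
The plan is to split the statement into two parts: first the claim that every knot and every nontrivial link (one with no unknotted, unlinked components) of mosaic number $4$ has tile number and minimal mosaic tile number equal to $12$, and then the enumeration of the three values $10$, $13$, and $16$ for links with unlinked components. For the first part the upper bound is immediate: since $m=4$ is even and the knot or link has no unknotted, unlinked components, Corollary \ref{bounds-knot} gives $t(K) \le 4^2 - 4 = 12$, and the same counting bounds the minimal mosaic tile number. I would also note that a knot of mosaic number $4$ is necessarily prime, since the smallest composite knot has too many crossings to be drawn on a $4$-mosaic; consequently Corollary \ref{bounds-knot} simultaneously supplies the matching lower bound $t(K) \ge 5\cdot 4 - 8 = 12$ in the knot case. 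The $12$-tile trefoil mosaic in Figure \ref{ineff-mosaics}(d) witnesses that this value is attained.

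For the lower bound in general I would argue directly on a minimally space-efficient $4$-mosaic. Because $m(K)=4$, every row and every column must be occupied; otherwise an unoccupied row or column could be collapsed to produce a $3$-mosaic, contradicting the mosaic number. By Lemma \ref{no-corners} the four corners are blank, so the only boundary positions available are the two interior tiles on each side. Lemma \ref{top-caps} forces the first and last occupied rows and the first and last occupied columns each to consist of a single cap, since here $(n-2)/2 = 1$; these four caps account for $8$ distinct non-blank boundary tiles. Finally, Lemma \ref{four-below} forces each of the four tiles of the inner $2\times 2$ board that is adjacent to a cap to have four connection points, hence to be non-blank. This yields exactly $8+4 = 12$ non-blank tiles, so the minimal mosaic tile number is $12$. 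Combined with the prime-knot bound (and, for the Hopf link and any other nontrivial links of mosaic number $4$, a direct check that enlarging the mosaic cannot reduce the tile count below $12$), we conclude $t(K)=12$.

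For the split case I would classify the links of mosaic number $4$ that possess an unknotted, unlinked component. Each such component is an unknot occupying its own region of the mosaic, and the $2$-component unlink is excluded because it has mosaic number $3$. I would enumerate, up to the planar isotopy and rotation/reflection symmetries already used above, the minimally space-efficient arrangements of unlinked components that genuinely require a $4$-mosaic rather than collapsing to a $3$-mosaic and that contain no reducible crossings, then count the non-blank tiles in each; these arrangements produce exactly $10$, $13$, and $16$. The main obstacle I expect is this last step: verifying that the list of split configurations is exhaustive and that each listed configuration really has mosaic number $4$ (so that none secretly collapses to a smaller mosaic), since the counting lemmas of the previous section were tailored to prime knots and must be re-examined once unlinked components are permitted.
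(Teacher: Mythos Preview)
Your argument for the first part is correct and in fact somewhat cleaner than the paper's. The paper invokes Corollary \ref{bounds-knot} for prime knots exactly as you do, but for nontrivial links it obtains the lower bound differently: it observes that a nontrivial link must contain at least two crossing tiles and then asserts (with reference to a figure) that any space-efficient suitably connected mosaic containing two crossing tiles already uses at least twelve non-blank tiles. Your route---forcing a single cap on each boundary side via Lemma \ref{top-caps} and then invoking Lemma \ref{four-below} to fill the entire $2\times 2$ inner board---is a direct structural count that avoids the crossing-tile step entirely and makes the rigidity of the $4$-mosaic layout transparent. The trade-off is the one you flag yourself: your count is carried out on a $4$-mosaic, so it pins down $t_M(K)=12$ immediately but requires an extra word to rule out a smaller tile count on a larger board for the link case; the paper's crossing-tile argument is size-independent and so settles $t(K)$ and $t_M(K)$ in one stroke.

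For the split links, the paper does not carry out the enumeration you propose; it instead appeals to the complete tabulation of $4$-mosaics by Kuriya and Shehab \cite{Kuriya} and simply reads off the space-efficient representatives with tile counts $10$, $13$, and $16$. Your plan to enumerate directly is feasible on a board this small, and your caution that Lemmas \ref{no-corners} and \ref{four-below} must be revisited once unknotted unlinked components are allowed is exactly right---those lemmas explicitly exclude that case.
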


\begin{proof}
For prime knots, this is a direct result of Corollary \ref{bounds-knot}, which says that when the mosaic number is 4, the tile number is bounded above and below by 12. For composite knots and nontrivial links with mosaic number 4, Corollary \ref{bounds-knot} only says that the upper bound is 12. As long as the link is nontrivial, there must be at least two crossing tiles in the mosaic. To be space-efficient, any suitably connected knot mosaic with at least two crossing tiles must have tile number at least 12, and up to symmetry, the only options are shown in Figure \ref{4-mosaics-l}.

\begin{figure}[h]
  \centering
  \includegraphics{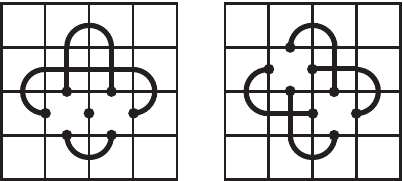}\\
  \caption{Possible nontrivial space-efficient 4-mosaics.}
  \label{4-mosaics-l}
\end{figure}

Kuriya and Shehab \cite{Kuriya} find a complete list of all possible 4-mosaics. The mosaics found there are not necessarily space-efficient, but it is not difficult to make them space-efficient. The results are shown in Figure \ref{4-mosaics}.
\end{proof}

\begin{figure}[h]
  \centering
  \includegraphics{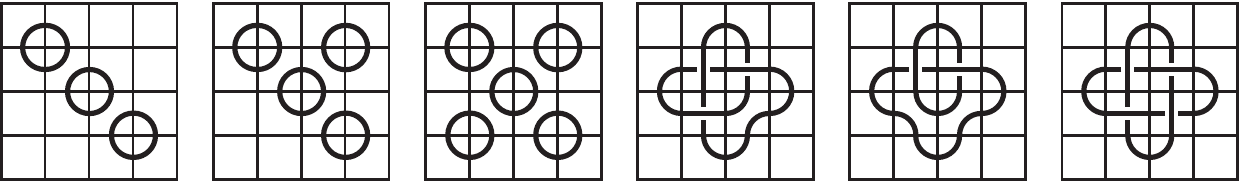}\\
  \caption{Space-efficient knot mosaics for knots and links with mosaic number 4.}
  \label{4-mosaics}
\end{figure}

\begin{corollary} The tile number (and minimal mosaic tile number) of the trefoil knot, Hopf link, and Solomon's knot is $t(3_1)=t(2^2_1)=t(4^2_1)=12$.
\end{corollary}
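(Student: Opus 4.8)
The plan is to deduce the corollary directly from the preceding theorem, which states that any knot or nontrivial link with mosaic number $4$ has tile number (and minimal mosaic tile number) equal to $12$. Hence it suffices to check that each of $3_1$, $2^2_1$, and $4^2_1$ is a nontrivial knot or link with mosaic number exactly $4$.

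For the trefoil, I would simply note that its mosaic number is $4$ (recorded in the introduction, citing \cite{Lee2}) and that it is a nontrivial prime knot, so the theorem gives $t(3_1)=12$ at once.

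For the two links I would pin down the mosaic number in two steps. For the lower bound, recall the result of Kauffman and Lomonaco \cite{Lom-Kauff} that the only knots or links representable on a $3$-mosaic are the unknot and the two-component unlink; since the Hopf link and Solomon's link are each nontrivial and non-split, neither fits on a $3$-mosaic, so both have mosaic number at least $4$. For the upper bound, I would exhibit each one on a $4$-mosaic, namely among the space-efficient $4$-mosaics collected in Figure \ref{4-mosaics}, forcing the mosaic number to be at most $4$ and therefore exactly $4$. The theorem then yields $t(2^2_1)=t(4^2_1)=12$.

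I do not expect a genuine obstacle here, as the argument is essentially a lookup into the previous theorem. The only step needing care is the identification: one must confirm that the pertinent $4$-mosaics really depict the Hopf link and Solomon's link rather than some other link, which can be verified by inspection or with KnotScape \cite{Thistle}. Once the link types and their mosaic numbers are confirmed, the tile-number equalities follow immediately with no additional computation.
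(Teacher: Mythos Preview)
Your proposal is correct and matches the paper's approach: the corollary is an immediate consequence of the preceding theorem once one knows that $3_1$, $2^2_1$, and $4^2_1$ each have mosaic number exactly $4$, which follows from the Kauffman--Lomonaco classification of $3$-mosaics together with the explicit $4$-mosaics in Figure~\ref{4-mosaics}. The paper treats this as self-evident and gives no separate proof, so your write-up is, if anything, slightly more detailed than necessary.
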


We now seek to find the possible tile numbers of space-efficient $5$-mosaics, and find the tile number of all knots and links with mosaic number 5. For a prime knot with mosaic number 5, Corollary \ref{bounds-knot} tells us that the tile number is bounded above and below by 17. For a composite knot or link $K$ with mosaic number 5, Corollary \ref{bounds-knot} provides an upper bound $t(K) \leq 17$. Just a little more work is required to show that this is also the lower bound.

\begin{theorem} The tile number (and minimal mosaic tile number) of any knot or link $K$ with mosaic number 5 and no unknotted, unlinked components is $t(K)=17$. This includes the prime knots $4_1$, $5_1$, $5_2$, $6_1$, $6_2$, and $7_4$. Moreover, any space-efficient 5-mosaics of $K$ has a layout as shown in Figure \ref{5-mosaics-l}.
\begin{figure}[h]
  \centering
  \includegraphics{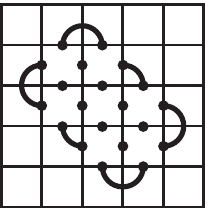}
  \caption{Only possible layout for a space-efficient 5-mosaic.}
  \label{5-mosaics-l}
\end{figure}
\end{theorem}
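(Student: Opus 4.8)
The plan is to separate the prime case, where the statement is essentially immediate, from the case of composite knots and nontrivial links, where both the lower bound and the rigidity of the layout require genuine work. For a prime knot $K$ with $m(K)=5$, Corollary \ref{bounds-knot} supplies both bounds at once: since $5$ is odd we get $t(K)\le 5^2-8=17$, and since $K$ is prime we get $t(K)\ge 5\cdot 5-8=17$, so $t(K)=17$. Because this value is forced on any space-efficient $5$-mosaic, it equals $t_M(K)$ as well. The six listed knots are prime with mosaic number $5$ (recorded in \cite{Lee2}), so they are covered immediately; what remains is the lower bound for composite knots and links with no unknotted, unlinked components, for which Corollary \ref{bounds-knot} gives only $t(K)\le 17$.

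For the general case I would first normalize a space-efficient $5$-mosaic using the structural lemmas. By Lemma \ref{no-corners} the four corners are blank, and by Lemma \ref{top-caps} each occupied boundary row or column reduces to caps, of which there is at most one since $1\le k\le(5-2)/2$. Hence every occupied boundary line carries exactly one cap, namely two tiles. The decisive bookkeeping is then to partition the non-blank tiles into the (at most four) boundary caps and the $3\times 3$ inner board: this yields a tile count of at most $2\cdot 4+9=17$, with equality exactly when all four caps are present \emph{and} each of the nine inner tiles is non-blank. Thus proving the lower bound $t(K)\ge 17$ is equivalent to proving this rigidity, and it will simultaneously identify the layout of Figure \ref{5-mosaics-l}.

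To force all four caps I would invoke $m(K)=5$: after collapsing any empty interior row or column, the occupied region cannot sit inside a $4\times 4$ block, so its row-span or its column-span equals $5$. The remaining task is to exclude the asymmetric $5\times 4$ shape, in which the same cap-plus-inner-board count gives at most $14$ tiles. Here Lemmas \ref{even-connections} and \ref{four-below} let me argue that such a configuration either admits a tile-reducing planar isotopy (contradicting space-efficiency), collapses to a $4$-mosaic (contradicting $m(K)=5$), or forces an unknotted, unlinked split component (excluded by hypothesis). With all four caps in place, I would then show every inner tile is occupied by a short case analysis: an empty inner tile, read together with the parity of entry points between consecutive rows from Lemma \ref{even-connections} and the four-connection-point conclusion of Lemma \ref{four-below} below each cap, always produces a collapsible row or column or a reducible crossing, again violating space-efficiency or minimality. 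This pins the tile number to exactly $17$ and leaves only the layout of Figure \ref{5-mosaics-l}, unique up to the symmetries of the square.

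The main obstacle is precisely this rigidity argument for non-prime $K$. Lemmas \ref{two-arcs1} and \ref{mid-rows}, which drove the clean row-by-row count for prime knots in Theorem \ref{bounds-mosaic}, assume primeness and are therefore unavailable, so both the exclusion of the $5\times 4$ shape and the fullness of the $3\times 3$ inner board must instead be extracted purely from connection-point parity, the four-below lemma, and the no-trivial-component hypothesis. I expect the bulk of the effort to be the finite but fussy enumeration of inner-board patterns needed to certify that no space-efficient $5$-mosaic of such a $K$ can leave an inner tile blank.
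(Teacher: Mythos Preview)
Your plan would eventually succeed, but it is considerably more elaborate than what the paper does, and the obstacle you flag never actually arises. The paper does \emph{not} separate prime from non-prime: it runs a single row-by-row count $2+4+5+4+2=17$ using only Lemma~\ref{top-caps} and Lemma~\ref{four-below}, neither of which assumes primeness. You are right that Lemmas~\ref{two-arcs1} and~\ref{mid-rows} require primeness, but for $n=5$ they are simply not invoked. Lemma~\ref{top-caps} with $k\le(5-2)/2$ forces exactly one top cap in the first occupied row; Lemma~\ref{four-below} then gives two four-connection-point tiles in the second row, and chasing connection points shows that row has at least four non-blank tiles, each with a bottom connection point. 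Symmetrically for rows four and five. Thus the middle row has at least four entry points above and four below; if it contained only four non-blank tiles, each would carry one top and one bottom entry, and a short parity check on horizontal connections forces all four to be vertical segments, so the row collapses and the mosaic is not space-efficient. Hence the middle row has at least five tiles, the total is at least $17$, and Theorem~\ref{bounds-mosaic} (again without primeness) gives the matching upper bound. Equality row by row then fixes the layout of Figure~\ref{5-mosaics-l}.

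So the ``finite but fussy enumeration'' you anticipate for composite knots and links is unnecessary: the direct middle-row argument substitutes for Lemma~\ref{mid-rows} in this small case and needs only the hypothesis that $K$ has no unknotted, unlinked component. Your cap-plus-inner-board decomposition and the exclusion of a $5\times 4$ shape would get there too, but at the cost of a case analysis the paper avoids entirely.
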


\begin{proof}
Because the mosaic number of $K$ is 5, either every row or every column must be occupied. Assume every row of the mosaic is occupied. By Lemma \ref{top-caps}, we may assume that the first row of the $5$-mosaic has two non-blank tiles, a top cap. By Lemma \ref{four-below}, the second row must have at least four non-blank tiles. Similarly, the last row has two non-blank tiles, and the next to last row has at least four non-blank tiles. Now we observe the middle row. There are at least four entry points at the top of this row and four entry points at the bottom of it. If there are exactly four non-blank tiles in this row, then this means that the entry points at the top of the row are vertically aligned with the entry points at the bottom of the row, and the four non-blank tiles in this row must be vertical line segments, which means that the mosaic is not space-efficient. Therefore, there must be five non-blank tiles in the middle row, giving us a minimum tile number of 17.

Minimally space-efficient mosaics for the knots and links mentioned are provided in Figure \ref{5-mosaics}.
\end{proof}

\begin{figure}[h]
  \centering
  \includegraphics{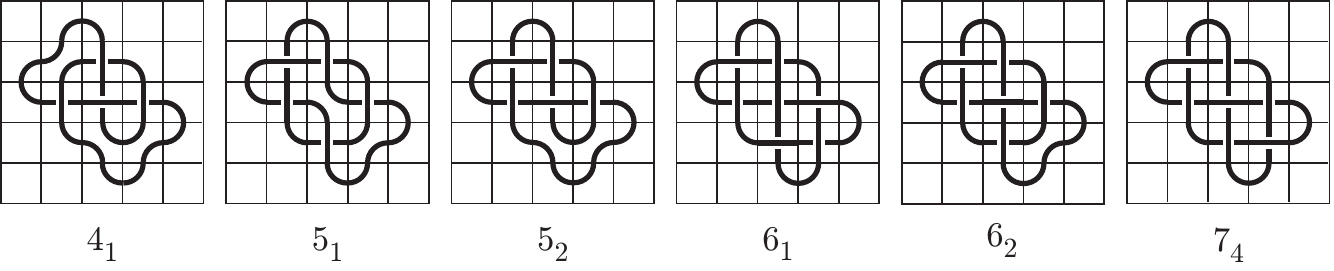}
  \caption{Space-efficient knot mosaic for prime knots with mosaic number 5.}
  \label{5-mosaics}
\end{figure}


\section{Tile Numbers of Knots with Mosaic Number 6}

Now we wish to do for 6-mosaics what we have done for the smaller mosaics. We seek to find the possible tile numbers of all 6-mosaics. In this section, we only consider mosaics of prime knots. Theorem \ref{bounds-mosaic} gives us the bounds for the tile number of any space-efficient prime knot. In particular, suppose we have a prime knot $K$ on a space-efficient $n$-mosaic. If $n=6$, then the tile number $t$ of the mosaic is $22 \leq t \leq 32$. If $n=7$, then the tile number $t$ of the mosaic is $27 \leq t(K) \leq 41$. This leads us to some immediate corollaries to Theorem \ref{bounds-mosaic}.

\begin{corollary}\label{tile-num=min-mos-tile-num} For any prime knot $K$ with mosaic number $m(K)\leq 6$, if the minimal mosaic tile number $t_M(K) \leq 27$, then the tile number of $K$ equals the minimal mosaic tile number of $K$.
\end{corollary}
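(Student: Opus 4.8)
The plan is to exploit the fact that a minimal mosaic is always one of the candidate mosaics for $K$, so the inequality $t(K) \leq t_M(K) \leq 27$ holds for free, and then to use the lower bound of Theorem \ref{bounds-mosaic} to show that no larger mosaic can undercut the minimal one once the value $27$ is in play. The entire argument amounts to comparing the prime-knot lower bound $5n-8$ against the number $27$, with the threshold case occurring at $n=7$, where $5(7)-8 = 27$ exactly.

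First I would dispose of the cases $m(K) \leq 5$. For a prime knot with $m(K)=4$ the earlier theorem on knots with mosaic number $4$ already gives $t(K)=t_M(K)=12$, and for $m(K)=5$ the earlier theorem on knots with mosaic number $5$ gives $t(K)=t_M(K)=17$; in both cases the conclusion is immediate. Thus the only genuinely new content is the case $m(K)=6$, and I would treat it as the heart of the proof.

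For $m(K)=6$, I would start from any space-efficient mosaic realizing $t(K)$ and, by collapsing unoccupied rows or columns as in the proof of Lemma \ref{two-arcs1} (a reduction that never increases the tile count), arrange that every row or every column is occupied. This yields an $n$-mosaic of $K$ with $n \geq m(K) = 6$ and tile number equal to $t(K)$, so Theorem \ref{bounds-mosaic} applies and gives $t(K) \geq 5n-8$. I would then split on $n$: if $n \geq 8$ then $t(K) \geq 32 > 27 \geq t(K)$, a contradiction; if $n=7$ then $t(K) \geq 27$, which combined with $t(K) \leq t_M(K) \leq 27$ forces $t(K)=t_M(K)=27$; and if $n=6$ the mosaic is minimal, so by definition of $t_M$ we have $t(K) \geq t_M(K)$, and with the reverse inequality we again obtain equality. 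In every case $t(K)=t_M(K)$.

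The step I expect to be the main obstacle is the reduction that lets me invoke Theorem \ref{bounds-mosaic}: I must verify that a tile-minimal realization of $t(K)$ can be brought into a space-efficient form in which every row or every column is occupied, without changing the tile number, so that the theorem's hypotheses genuinely hold. The other delicate point is that the estimate is tight at $n=7$, where $5n-8=27$ leaves no slack; I should therefore be careful that the lower bound is $t \geq 5n-8$ (the conclusion of the proof of Theorem \ref{bounds-mosaic}) and that primality is in force, since the lower-bound lemmas, and hence the threshold, rely on $K$ being a prime knot.
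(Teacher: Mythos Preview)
Your argument is correct and follows the same idea as the paper's proof: invoke the prime-knot lower bound $5n-8$ from Theorem~\ref{bounds-mosaic} to see that any $n$-mosaic with $n\geq 7$ already has at least $27$ non-blank tiles, so the tile number cannot drop below $t_M(K)$ by passing to a larger board. The paper dispatches this in two sentences, whereas you spell out the case split on $n$ and the reduction to a mosaic with every row or column occupied; your caution about the inequality being $\geq$ rather than $>$ (the proof of Theorem~\ref{bounds-mosaic} yields $\geq 5n-8$, matching Corollary~\ref{bounds-knot}) is well placed and is exactly what makes the $n=7$ threshold work.
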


\begin{proof} We already knew this result for $m(K) \leq 5$. Since a $7$-mosaic or larger cannot have tile number smaller than 27, we know that for any prime knot with mosaic number 6 and minimal mosaic tile number at most 27, the number of non-blank tiles cannot be decreased by placing it on a larger mosaic.
\end{proof}

We can now determine the tile number of all prime knots with crossing number 7 or less and several prime knots with crossing number 8 or 9.

\begin{corollary}\label{t=22} If $K$ is one the following prime knots, then the tile number of $K$ is $t(K)=22$:
\begin{enumerate}
  \item $6_3$,
  \item $7_1$, $7_2$, $7_3$, $7_5$, $7_6$, $7_7$,
  \item $8_1$, $8_2$, $8_3$, $8_4$, $8_7$, $8_8$, $8_9$, $8_{13}$,
  \item $9_5$, and $9_{20}$.
\end{enumerate}
\end{corollary}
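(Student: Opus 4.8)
The plan is to combine the lower bound already available for prime knots with explicit constructions that meet it. Each knot $K$ in the list is a prime knot whose mosaic number is known to be $6$ from the tabulations of \cite{Lee2} and \cite{Heap2} (none of them appears among the mosaic-number-$5$ knots $4_1, 5_1, 5_2, 6_1, 6_2, 7_4$ treated earlier, so their mosaic number is exactly $6$). Since $m(K)=6$, Corollary \ref{bounds-knot} immediately gives the lower bound $t(K) \geq 5m-8 = 22$, and the same argument applied to a space-efficient minimal mosaic (where every row or column is occupied) gives $t_M(K) \geq 22$.

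It therefore remains to establish the matching upper bound. First I would exhibit, for each knot in the list, a single space-efficient $6$-mosaic realizing it with exactly $22$ non-blank tiles. By the counting lemmas of Section \ref{counting-tools}, such a mosaic is forced to have the row profile $2,4,5,5,4,2$ (a top cap, a four-tile row, two interior five-tile rows, a four-tile row, and a bottom cap), so every candidate layout is of a very restricted form; this is the $6$-mosaic analogue of the layout in Figure \ref{5-mosaics-l}. Producing the diagrams is then a matter of placing crossing tiles within this skeleton so that the resulting projection is the desired knot, and I would confirm each knot type using KnotScape \cite{Thistle}, exactly as described in the introduction.

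Once a $22$-tile $6$-mosaic is in hand for a given $K$, we have $t_M(K) \leq 22$, and with the lower bound this yields $t_M(K)=22$. Because $22 \leq 27$, Corollary \ref{tile-num=min-mos-tile-num} applies and guarantees that no larger mosaic can use fewer tiles (a $7$-mosaic already needs at least $5\cdot 7 - 8 = 27$ non-blank tiles), so $t(K)=t_M(K)=22$. This settles both the tile number and the minimal mosaic tile number simultaneously, matching the parenthetical claim in the statement.

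The main obstacle is not the inequality-chasing, which is essentially immediate from the earlier results, but the case-by-case constructive part: I must actually find a valid $22$-tile $6$-mosaic for each of the roughly eighteen listed knots and certify its knot type. The risk is that the rigid $2,4,5,5,4,2$ profile is too constrained to host some particular knot, in which case that knot would not belong in the list; verifying that each listed knot does admit such a minimally space-efficient mosaic — and not merely a $6$-mosaic with more tiles — is where the real effort lies.
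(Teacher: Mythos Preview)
Your proposal is correct and follows essentially the same approach as the paper: use the lower bound $t(K)\geq 5m-8=22$ from Corollary~\ref{bounds-knot} and match it with explicit $22$-tile $6$-mosaics (the paper supplies these in Figure~\ref{6-mosaics}, verified via KnotScape). One small streamlining: once you have a $22$-tile mosaic of $K$ on any board you immediately get $t(K)\leq 22$ by definition, so the detour through $t_M(K)$ and Corollary~\ref{tile-num=min-mos-tile-num} is unnecessary, though not incorrect.
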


\begin{proof}  We have given minimally space-efficient mosaics with tile number 22 for each of these knots in Figure \ref{6-mosaics}. Since the mosaic number of each of these knots is 6, we know that they cannot have a tile number smaller than 22.
\end{proof}

\begin{figure}[h]
  \centering
  \includegraphics{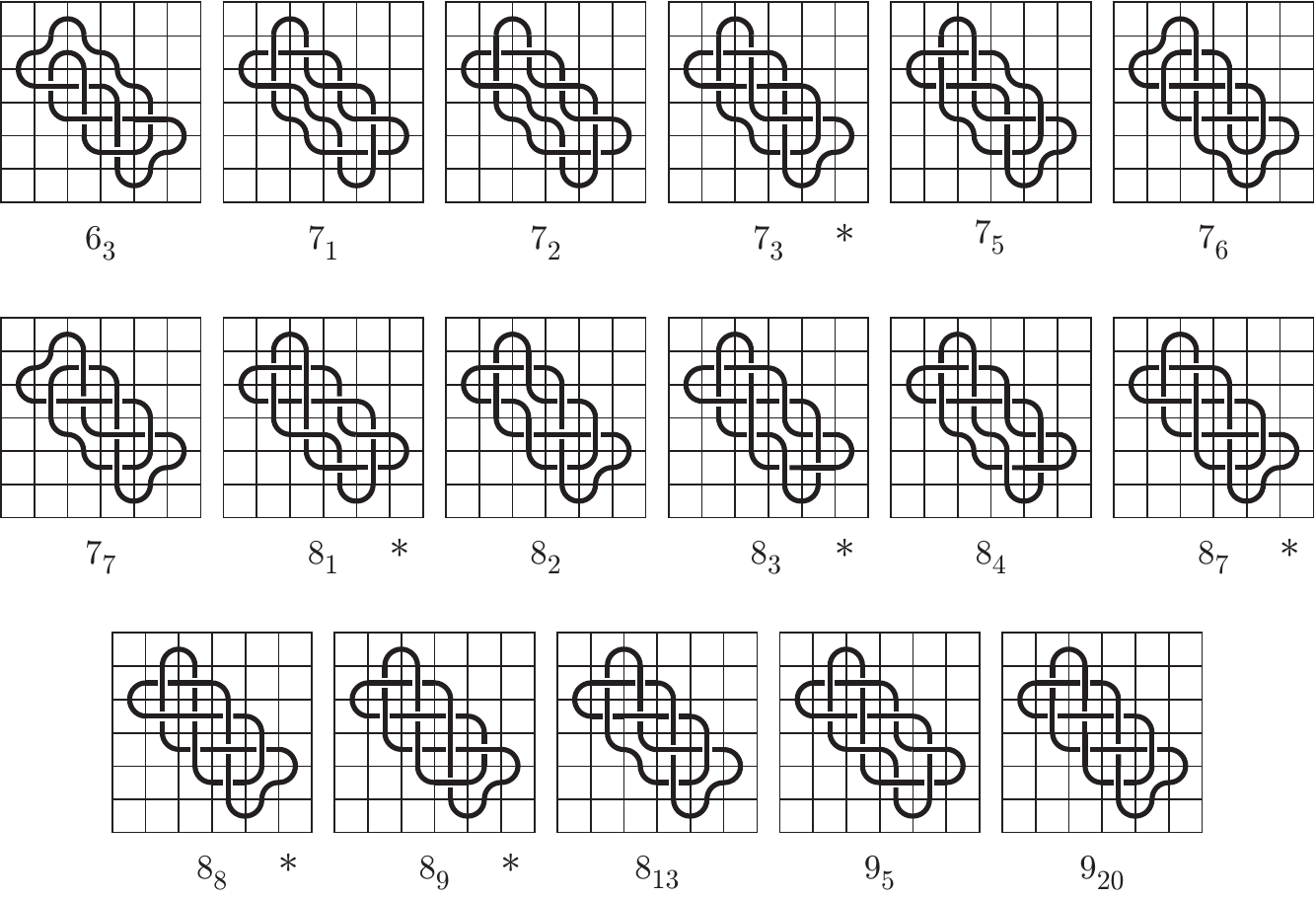}
  \caption{Space-efficient knot mosaics for prime knots known to have mosaic number 6 and tile number 22.}
  \label{6-mosaics}
\end{figure}

For many of these minimally space-efficient knot mosaics in Figure \ref{6-mosaics}, the crossing number of the knot was also realized. However, this is not always possible. In order to obtain the minimally space-efficient knot mosaic for $7_3$, for example, we had to use eight crossings. In \cite{Heap2}, the authors find that none of the possible minimally space-efficient knot mosaics with twenty-two non-blank tiles and exactly seven crossings produced $7_3$. The fewest number of non-blank tiles needed to represent $7_3$ with only seven crossings is twenty-four. Thus, on a minimally space-efficient knot mosaic, for the tile number (or minimal mosaic tile number) to be realized, it might not be possible for the crossing number to be realized. Each such knot mosaic in Figure \ref{6-mosaics} is labeled with an asterisk ($\ast$).

We have established that the tile number of a space-efficient prime knot 6-mosaic is between 22 and 32, but we can be more specific. We have already given examples of knot mosaics with tile number 22, and we now determine the other possible tile numbers.

\begin{theorem}\label{tile-numbers} If we have a space-efficient 6-mosaic of a prime knot $K$ for which either every column or every row is occupied, then the only possible values for the tile number of the mosaic are 22, 24, 27, and 32. Furthermore, any such mosaic of $K$ is equivalent (up to symmetry) to one of the mosaics in Figure \ref{tile-numbers-1a}.
\begin{figure}[h]
  \centering
  \includegraphics{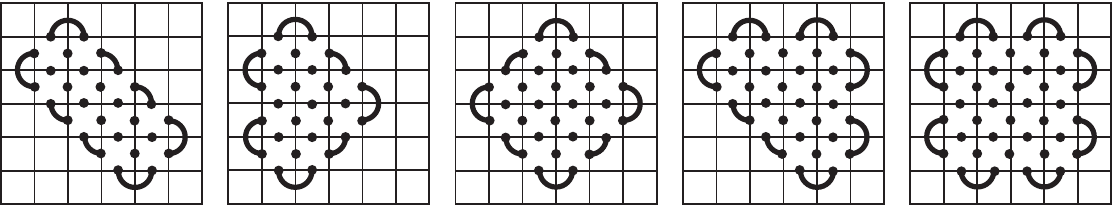}\\
  \caption{Only possible layouts for a space-efficient 6-mosaic.}
  \label{tile-numbers-1a}
\end{figure}
\end{theorem}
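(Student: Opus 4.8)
The plan is to reduce the whole question to a one–dimensional count along the rows, enumerate the finitely many feasible row–profiles, and then discard the spurious ones using the column structure together with primality and reducedness. By the symmetry built into the statement I would assume throughout that every row is occupied; the case where every column is occupied follows after a $90^\circ$ rotation. Writing $r_1,\dots,r_6$ for the number of non-blank tiles in the six rows, Lemma~\ref{top-caps} forces the first and last occupied rows to be caps, so $r_1,r_6\in\{2,4\}$; Lemma~\ref{four-below} gives $r_2,r_5\ge 4$; and Lemma~\ref{mid-rows} gives $r_3,r_4\ge 5$. Combined with Lemma~\ref{no-corners} and the evenness of Lemma~\ref{even-connections}, these recover the global window $22\le t\le 32$ of Theorem~\ref{bounds-mosaic}, so the task is to pin down which intermediate totals actually occur.

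To sharpen this I would track the number $e_i$ of entry points between consecutive rows $i$ and $i+1$. Each $e_i$ is even by Lemma~\ref{even-connections}; since the outer rows are pure caps we have $e_1=r_1$ and $e_5=r_6$ in $\{2,4\}$, while $e_2,e_3,e_4\in\{4,6\}$ because (as in the proof of Lemma~\ref{mid-rows}) at least four entry points are needed between interior rows to avoid a composite knot or a split component, and at most six are available in a $6$-mosaic. Classifying the tiles of a row as single arcs, horizontal or vertical segments, and four-valent tiles, one checks that the number of single arcs in each row is even and that $r_i=e_{i-1}+e_i-(v_i+f_i)+h_i$, where $v_i,f_i,h_i$ count vertical segments, four-valent tiles, and horizontal segments. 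Since $v_i+f_i\le\min(e_{i-1},e_i)$, this yields $r_i\ge\max(e_{i-1},e_i)$, refined to $r_3,r_4\ge 5$. Running over the admissible sequences $(e_1,\dots,e_5)$ and minimizing produces a short candidate list of tile numbers.

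The hard part is the elimination step, and this is where the bulk of the work lies: the row bound by itself also admits totals such as $23,25,26,28,29,30$, so these must be ruled out by a genuinely two-dimensional argument. Here I would impose the \emph{identical} cap/$\ge 5$ structure on the columns, whose counts $c_1,\dots,c_6$ obey the same inequalities and satisfy $\sum_j c_j=\sum_i r_i=t$, and then show that the offending profiles cannot be realized simultaneously in both directions. The typical obstruction is that a candidate such as $e=(2,4,4,6,2)$ forces two interior rows to be full while an outer row is only a single cap; the extra entry points demanded between the two full interior rows would have to be delivered through the outer columns, which Lemma~\ref{top-caps} forces to be caps, and one checks case by case that this is impossible without producing an unknotted component, a reducible crossing, or a collapsible row or column that contradicts space-efficiency (via Lemma~\ref{two-arcs1}). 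I expect this simultaneous row-and-column bookkeeping, rather than any single inequality, to be the main obstacle.

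Carrying out the elimination leaves exactly the four profiles with $t\in\{22,24,27,32\}$, which I would then match to the layouts recorded in Figure~\ref{tile-numbers-1a}. Each of these four values is genuinely attained: $22$ by the examples already exhibited for the knots of Corollary~\ref{t=22}, and the remaining three by concrete suitably connected placements read off from the surviving profiles. Thus the only possible tile numbers are $22,24,27,32$, and every such mosaic is equivalent up to symmetry to one of the displayed layouts.
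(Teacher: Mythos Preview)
Your scaffolding---count non-blank tiles row by row, impose the cap structure from Lemmas~\ref{no-corners}--\ref{mid-rows}, and then cross-check against the columns---is the same as the paper's. The gap is that you stop at the weak bound $r_2,r_5\ge 4$ from Lemma~\ref{four-below}, which is precisely why your enumeration still contains the spurious totals $23,25,26,28,29,30$ that you then hope to kill by an unspecified ``simultaneous row-and-column bookkeeping''. The paper's actual work goes into a sharper one-dimensional claim proved at the outset: if the first row is a single top cap then the second row contains \emph{no} horizontal segment tile, so $r_2=4$ exactly; if the first row is two caps then $r_2=6$ exactly (and symmetrically for $r_5$). Proving this requires a concrete geometric case analysis on the position of the cap and on the tile below a putative horizontal segment, showing in each case that one obtains a collapsible column, a non-prime diagram, or a violation of Lemma~\ref{four-below}. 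With this sharpening in hand, $(r_1,r_2)\in\{(2,4),(4,6)\}$, $(r_5,r_6)\in\{(4,2),(6,4)\}$, $r_3,r_4\in\{5,6\}$, and the identical column constraints then pin down the occupied region to one of the five displayed shapes without the open-ended elimination you anticipate. Your sample elimination for $e=(2,4,4,6,2)$ is too vague to stand in for this argument, and ``one checks case by case'' is not a proof.

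You also do not address the ``Furthermore'' clause. Knowing the tile counts $22,24,27,32$ is not the same as knowing that the mosaic is one of the layouts in Figure~\ref{tile-numbers-1a}: the paper spends a separate paragraph arguing that every interior tile in those shapes must carry four connection points, using Lemma~\ref{four-below} for most positions and a primality/space-efficiency argument for the four connection points meeting at the center of the board. Without this, you have constrained the silhouette but not the mosaic.
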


\begin{proof}  If there are two non-blank tiles (one top cap) in the first row, we claim the second row must have four non-blank tiles. If it had more than four, there would be at least one horizontal segment tile in this row, and this will cause the mosaic to not be space-efficient. The same result holds for the second occupied row from the bottom and the second occupied columns from the right or left. To prove the claim, we consider the possible locations of the top cap in the first occupied row.

Suppose there is a top cap in the first two tile positions after the corner tile. Then the first tile in the second occupied row must be a single arc tile $T_2$, followed by two tiles with four connection points. If the next two tiles are both horizontal segment tiles, this forces the arc tile $T_1$ into the last position in this row, which is necessarily part of a right cap, and the previous tile position with the horizontal segment should have had four connection points by Lemma \ref{four-below}. If there is only one horizontal segment, then the fifth tile position is the arc tile $T_1$. Assume this is not part of a cap, and look at the tile directly below the horizontal segment. Because there is no connection point at the top of this tile, it can only be a horizontal segment, $T_1$ arc, $T_2$ arc, or blank tile. If it is a horizontal segment tile, then the mosaic is not space-efficient because either everything in this column is a horizontal segment or blank tile or the mosaic is as depicted in Figure \ref{no-segments}(a), in which the entire upper, left $3 \times 3$ corner of the mosaic can be shifted to the right, collapsing the horizontal segments. If it is a $T_1$ tile, the knot is not space-efficient, as we can see in Figure \ref{no-segments}(b), and the tile number can be decreased by pushing the horizontal segment and $T_1$ tiles from the second row into the $T_1$ tile in the third row. If it is a $T_2$ tile or blank tile, the mosaic must be as in Figure \ref{no-segments}(c), and either the depicted knot is not prime or the mosaic is not space-efficient, as shown by the dashed line cutting through the knot.

\begin{figure}[h]
  \centering
  \includegraphics{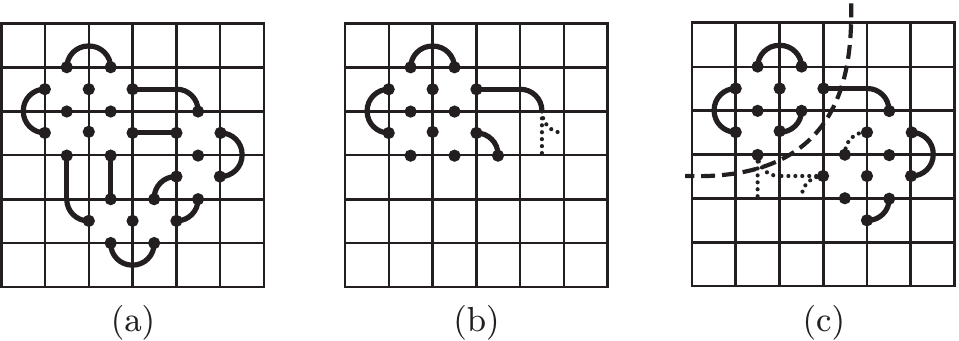}\\
  \caption{Possible configurations with a horizontal segment in the fourth tile position of the second row.}
  \label{no-segments}
\end{figure}

If there is a top cap in the second and third tile positions after the corner tile, it is easy to see that a horizontal segment tile is not allowed in the second row. If there was one, this would force a single arc tile into a boundary column, which is necessary part of a cap, and the tile position with the horizontal segment should have had four connection points by Lemma \ref{four-below}. This completes the proof of our claim that the second row has no horizontal segment tiles.

Suppose every row of the mosaic is occupied. By Lemma \ref{top-caps}, the first row has either one or two top caps, that is, two or four non-blank tiles. We just showed that if there are two non-blank tiles in the first row, the second row has exactly four non-blank tiles. If there are four non-blank tiles in the first row, by Lemma \ref{four-below} we know that the second row will have four tiles with four connection points each, along with one single arc tile $T_1$ and one single arc tile $T_2$, meaning the second row must have six non-blank tiles. By Lemma \ref{mid-rows}, we know that the middle two rows have five or six non-blank tiles. The non-blank tiles in the last two rows are counted as they were in the first two rows. Analogously, we know how many non-blank tiles can be in each of the columns. If not every row of the mosaic is occupied, then every column must be, and a similar argument applies. With all of this in mind, the five layouts depicted in the theorem are the only possible configurations, up to rotation, reflection, or translation, of the non-blank tiles.

Now we turn our attention to the connection points. Notice that all of the nondeterministic tiles must have four connection points. Most of them are there because Lemma \ref{four-below} requires it. In the second layout, for example, all of the connection points are required by Lemma \ref{four-below}. In the remaining four layouts, the only connection points that are not required by Lemma \ref{four-below} are the four connection points on the tile edges that meet at the center point of the mosaic. In the first, fourth, and fifth layouts, if any of these four connection points are missing, then either the knot is not prime or the mosaic is not space-efficient because there would be only two connection points between the third and fourth columns. In the third layout, if any of these four connection points is missing, then there would only be eight tile locations with four connection points. If all eight of these are crossing tiles, the result is a two component link. If less than eight of them are crossings, we know the mosaic is not space-efficient because every prime knot with seven crossings or less has tile number less than 24.
\end{proof}

Now that all of the possible layouts for a space-efficient 6-mosaic have been determined, all that remains for 6-mosaics is to determine what knots have mosaic number 6 and then determine their tile numbers. This task is completed in \cite{Heap2}, where the authors determine minimally space-efficient knot mosaics for all prime knots with mosaic number 6. Consequently, the minimal mosaic tile number (and tile number in the case $t(K)=t_M(K) \leq 27$) of each of these knots is determined.

To conclude this discussion, we look ahead to tile numbers for 7-mosaics. For prime knots with mosaic number 4 or 5, the tile number was completely determined by the mosaic number. For prime knots with mosaic number 6, Theorem \ref{tile-numbers} provides four possibilities for the tile number. For prime knots with mosaic number 7, there are many more possibilities.

\begin{conjecture} If we have a space-efficient 7-mosaic of a prime knot $K$ for which either every column or every row is occupied, then the only possible values for the tile number of the mosaic are 27, 29, 31, 32, 34, 36, 37, 39, 40, and 41.
\end{conjecture}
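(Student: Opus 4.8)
The plan is to follow the strategy used to prove Theorem \ref{tile-numbers} for 6-mosaics, but with a substantially larger case analysis forced by the extra row and column. Assuming every row is occupied (the case of every column occupied following by a rotation of the mosaic), I would first pin down the possible number of non-blank tiles in each row. By Lemma \ref{top-caps} the first occupied row consists of either one or two top caps, so it has $2$ or $4$ non-blank tiles, and likewise for the last occupied row. By Lemma \ref{four-below}, the row immediately below is constrained: adapting the ``no horizontal segment in the second row'' claim from the proof of Theorem \ref{tile-numbers}, I expect to show that a single top cap forces exactly $4$ non-blank tiles in the second row while two top caps force $6$, and symmetrically for the next-to-last row. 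Lemma \ref{mid-rows} then forces each of the three middle rows (rows $3$, $4$, and $5$) to have at least $5$ non-blank tiles, and Lemma \ref{even-connections} together with the parity of the outer columns caps each middle row at $7$. This yields a finite list of candidate row profiles $(r_1,\dots,r_7)$ with $r_1,r_7\in\{2,4\}$, with $r_2,r_6\in\{4,6\}$ coupled to $r_1,r_7$, and with $r_3,r_4,r_5\in\{5,6,7\}$.

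The crucial second step is that the row profile and the column profile are not independent: both must be admissible in the sense above, and both must sum to the same tile number $t$. I would exploit Lemma \ref{even-connections} (each pair of adjacent rows or columns has an even number of entry points) together with the center-tile analysis at the end of the proof of Theorem \ref{tile-numbers} to show that raising a middle row from $5$ to $6$, or promoting a single cap to a double cap, cannot be done in isolation: it forces compensating non-blank tiles in the columns, which in turn restricts the attainable totals. Carrying this coupling through the candidate profiles is what should produce the gaps, eliminating the totals $28$, $30$, $33$, $35$, and $38$ and leaving exactly the ten listed values as the only sums realizable by a suitably connected, space-efficient, prime-knot layout. As in Theorem \ref{tile-numbers}, the argument would conclude by exhibiting the finite list of surviving layouts (the analogue of Figure \ref{tile-numbers-1a}).

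For the existence half, I would construct an explicit space-efficient 7-mosaic of a prime knot realizing each of the ten tile numbers $27, 29, 31, 32, 34, 36, 37, 39, 40, 41$, and verify primeness of each depicted knot using KnotScape \cite{Thistle}, exactly as is done elsewhere in the paper. The base profile $(2,4,5,5,5,4,2)$ gives the minimum $t=27$, and the fully packed profile $(4,6,7,7,7,6,4)$ gives the maximum $t=41=7^2-8$ guaranteed by Theorem \ref{bounds-mosaic}; the intermediate values require specific layouts drawn from the surviving cases.

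The hard part will be the non-existence direction in the second step. For 6-mosaics the enumeration collapsed to only five layouts, but for 7-mosaics the number of a priori admissible row and column profiles is far larger, and the delicate point is proving that certain combinations of row counts simply cannot be completed to a suitably connected prime-knot mosaic without changing the total. In particular, ruling out the excluded totals $28$, $30$, $33$, $35$, and $38$ requires showing that no admissible layout sums to these values, and I anticipate that the interaction between the row profile, the column profile, and the global connectivity constraint is intricate enough that a complete verification may require computer assistance. This combinatorial explosion, rather than any single conceptual difficulty, is what makes the statement natural to pose as a conjecture rather than as a theorem.
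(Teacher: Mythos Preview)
The paper does not prove this statement: it is explicitly labeled a \emph{Conjecture} and appears as the final sentence of the paper, with no argument offered. There is therefore no ``paper's own proof'' to compare your proposal against.

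That said, your proposed strategy is exactly the natural extension of the paper's proof of Theorem~\ref{tile-numbers} to the $7$-mosaic setting, and you correctly identify the source of difficulty. The row-by-row constraints you list are the right ones: Lemma~\ref{top-caps} gives $r_1,r_7\in\{2,4\}$; the ``no horizontal segment in the second row'' claim (which would indeed need to be re-established for $n=7$, since the paper's argument is tailored to $n=6$) would couple $r_2,r_6$ to the adjacent boundary rows; and Lemma~\ref{mid-rows} bounds the three middle rows below by $5$. Your recognition that the row profile and column profile must simultaneously be admissible and sum to the same total is the key structural observation that would generate the gaps at $28,30,33,35,38$. You are also right that the existence direction is straightforward once the layouts are in hand.

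Where your proposal remains a sketch rather than a proof is precisely where you say it does: the elimination of the five forbidden totals requires a full enumeration of compatible row/column profiles together with the connection-point analysis at the mosaic center, and you have not carried this out. This is not a gap in the sense of a wrong idea---it is simply the unfinished combinatorial work that the authors themselves declined to complete, which is presumably why they stated the result as a conjecture.
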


\bibliographystyle{amsplain}
\bibliography{bibliography}
\addcontentsline{toc}{section}{\refname}

\end{document}